\newcounter{results}[section]
\theoremstyle{plain}
\newtheorem{theorem}[results]{Theorem}
\newtheorem{lemma}[results]{Lemma}
\newtheorem{proposition}[results]{Proposition}
\newtheorem{corollary}[results]{Corollary}
\theoremstyle{remark}
\newtheorem{remark}[results]{Remark}
\theoremstyle{definition}
\newtheorem{definition}[results]{Definition}
\newtheorem{hypo}[results]{Hypotesis}
\numberwithin{equation}{section}
\newcommand{\R}{\ensuremath{\mathbb R}} 
\newcommand{\N}{\ensuremath{\mathbb N}} 
\newcommand{\weakto}{\ensuremath{\rightharpoonup}} 
\newcommand{\eps}{\ensuremath{\varepsilon}} 
\renewcommand{\div}{\mathrm{div}\,} 
\DeclareMathOperator*{\argmin}{arg\,min} 
\renewcommand{\d}{\,\mathrm d}  
\newcommand{\prob}{\ensuremath{\mathcal{P}}} 
\newcommand{\m}[1]{\mathsf m_2(#1)} 
\newcommand{\meps}[1]{\mathsf m_{2+\eps}(#1)} 
\newcommand{\mm}{\mathsf m_2}
\newcommand{\ev}{\mathsf e} 
\newcommand{\adm}[2]{\mathcal A_{#1}(#2)} 
\newcommand{\F}{\mathrm{F}} 
\newcommand{\V}{\mathrm{V}} 
\renewcommand{\H}{\mathcal{H}} 
\newcommand{\HF}{\mathcal{H}_{\F}} 
\renewcommand{\S}{\mathrm{S}} 
\renewcommand{\SS}{\mathscr{S}} 
\newcommand{\eeta}{\boldsymbol{\eta}}
\newcommand{\ppi}{\boldsymbol{\pi}}
\newcommand{\ssigma}{\boldsymbol{\sigma}}
\newcommand{\sfs}{\mathsf s} 
\newcommand{\rmC}{\mathrm C} 
\newcommand{\epi}[1]{\mathrm{epi}\,(#1)} 
\newcommand{\tto}{\rightrightarrows} 
\title[Reachability for multiagent control systems via Lyapunov functions]{Reachability for multiagent control systems via Lyapunov functions  }
\author{Giulia Cavagnari (\Letter) }
\address{Giulia Cavagnari: Politecnico di Milano, Dipartimento di Matematica, Piazza Leonardo Da Vinci 32, 20133 Milano (Italy)}
\email{giulia.cavagnari@polimi.it}
\author{Marc Quincampoix}
\address{Marc Quincampoix: UMR CNRS 6205, Univ Brest,
  Laboratoire de Math\'ematiques de Bretagne Atlantique, 6 avenue Victor Le Gorgeu, 29200 Brest (France)}
\email{marc.quincampoix@univ-brest.fr}
\subjclass{49L25, 49J52, 49K27, 49J15, 34A60, 93C15}
 \keywords{Reachability, Hamilton-Jacobi-Bellman equation, Lyapunov function, optimal control in Wasserstein spaces, multi-agent.}
 \thanks{\emph{Acknowledgments.} This research is supported by INdAM-GNAMPA under the Project 2024 ``Controllo ottimo infinito dimensionale: aspetti teorici ed applicazioni'', CUP E53C23001670001.}
\begin{document}

\begin{abstract} This paper concerns the problem of reachability of a given state for a  multiagent control system in $\R ^d$. In such a system, at every time each agent can choose his/her velocity which depends both on his/her position and on the position of the whole crowd of agents (modeled by a probability measure on $ \R ^d$). The main contribution of the paper is to study the above reachability problem with a given rate of attainability through a Lyapunov method adapted to the Wasserstein space of probability measures. As a byproduct we obtain a new comparison result for viscosity solutions of Hamilton Jacobi equations in the Wasserstein space.

\end{abstract}

\maketitle


\thispagestyle{empty}


\section{Introduction} 
During the last years, there has been a large interest on  controlled multiagent systems modeled on the Wasserstein space of probabilty measures on $ \R ^d$. In such a system, the number of agents  is so huge that only a statistical description is possible. Namely the state variable $ \mu $  is a probability measure on $ \R ^d$ such that for any (Borel) set $A \subset \R ^d$,  $ \mu (A)$ is the fraction of the total amount of agents that are present in $A$. The time evolution $ t \mapsto \mu _t$ of the state variable occurs according to a two-level dynamics, as follows.

{\em - Microscopic level}. Each agent at position $x(t)$ at time $t$ follows 
$$ \dot x (t) \in \F (x (t) , \mu _t) \mbox{  for almost every $t \geq 0$,  } $$
where $ \F : \R ^d \times \prob_2(\R^d) \tto \R^d$ is a set-valued map and $\prob _2(\R^d)$ is the set of probability measures on $\R^d$ with finite second moment.
Namely, the velocity of each agent depends both  on his own position and on the position of the whole crowd of agents. It is worth pointing out that we make an indistiguishability assumption meaning that an agent does not interact individually with every other agents but only with the whole crowd of agents.

{\em - Macroscopic level}. The dynamics of the crowd of agents is given by the following so-called  {\it continuity equation} 
\begin{equation}\label{eqmu}
\partial_t\mu_t+\mathrm{div}(v_t\mu_t)=0 \mbox{ for a.e. $t$ and $\mu_t$-a.e. $x\in \mathbb R^d $, } 
\end{equation} where $ v_t (\cdot)$ is a Borel time dependent vector field on $ \R ^d$.
This equation holds true  in the sense of distributions on $ \R \times\R^d$ (cf. Definition \ref{def-macro} later on). 

The coupling between microscopic and macroscopic dynamics is given by 
\begin{equation}\label{coupling}
 v_t (x) \in \F(x,\mu_t)  \mbox{ for a.e. $t$ and $\mu_t$-a.e. $x\in \mathbb R^d $.} 
\end{equation}
\medskip

Such bi-level dynamics have been widely studied in the literature. For the continuity equation we refer to \cite{ags}, the control dynamics was studied in \cite{CMR,CMQ, CLOS, CMP, JMQ20, JMQ21} (cf. also \cite{AMQ, BF22, BFarxiv, BadF22}).

\medskip

Our main goal is to study a reachability problem: the controller aims to drive the system to a given probability measure $\bar \mu \in \prob _ 2(\R^d)$ following an admissible solution of the dynamical system \eqref{eqmu},\eqref{coupling}. Some results in this direction have been obtained in \cite{DMR}, where the authors study the controllability problem in the case where the Lipschitz control is localized to act in a prescribed region. However, in addition to the required regularity, the authors address the problem in the case of local dynamics, thus without interactions.

Analogously to the classical case \cite{Au}, we aim to provide sufficient conditions for reachability using a Lyapunov function approach under more general assumptions on the dynamics, including non-locality. A recent contribution for Measure Differential Equations in this direction is given in \cite{DMPV}. We assume that there exists a Lipschitz continuous function $V :  \prob _ 2(\R^d) \to \R _+$ such that
$$ \{ \bar \mu \} = \{ \mu \in  \prob _ 2(\R^d) \,:\, V ( \mu ) =0\} .$$
It is well known that the requirement for $V$ to serve as a Lyapunov function is closely linked to the Hamiltonian of the given dynamical system.
Of course, since the Wasserstein space $\prob _2(\R^d)$  is not a vector space and the Lyapunov functions could not  be regular enough, we need a suitable notion of solution for stating the corresponding partial differential inequality satisfied by the Lyapunov function. We will use the notion of viscosity (super) solution in the Wasserstein space adopted in \cite{JMQ20, JMQ21}. Various notions of viscosity solution for Hamilton Jacobi equations have been investigated in \cite{AJZ, CQ, G1, G2, Jimenez, JMQ20, JMQ21, MQ}.

The main results of the paper are contained in Theorem \ref{thm:main}. {\color{black} In particular, we investigate an asymptotic rate of convergence of the trajectory with an exponential decay.} A key ingredient in our approach is a {\color{black}comparison result for Hamilton Jacobi equations in Wasserstein spaces, which is a byproduct of our article. Unlike the analogous result developed in \cite{JMQ20}, which serves as inspiration for our approach, we do not assume boundedness of the viscosity solutions involved. Instead, we show that the result can be established under a suitable Lipschitz continuity condition, which fits more naturally within our framework.}

\medskip

The paper is organized as follows: a preliminary section is devoted to notations, essential recalls on tools of optimal transport theory and introduction of the dynamical setting under investigation. In Section \ref{sec:HJB}, we recall basic facts on Hamilton Jacobi Equations and we state an adapted version of the comparison principle for viscosity solutions on the Wasserstein space, whose proof is postponed to Appendix \ref{sec:appA}. Section \ref{sec:reach} contains the core of the paper{\color{black}; examples of application of the theory are presented in Section \ref{sec:examples}}. Finally, Appendix \ref{app0} is devoted to some technical proofs.

%
\section{Preliminaries and settings} 
In this section, we first recall standard definitions and results of measure theory and optimal transport theory for which we refer to the survey \cite{ags}.
Then, we introduce the dynamical setting under consideration.

\subsection{The Wasserstein metric}\label{sec:W2}
Let $X$ be a separable Banach space (here $X$ will be either $\R^d$ or $\rmC([a,b];\R^d)$, the space of $\R^d$-valued continuous curves on $[a,b]\subset\R$ endowed with the uniform norm). 
We denote by $\prob(X)$ the space of Borel probability measures on $X$, endowed with the topology of narrow convergence\footnote{ We recall  that a sequence $\{\nu_n\}_{n\in\N}\subset\prob(X)$ \emph{narrowly converges} to $\nu\in\prob(X)$, and we write $\nu_n\weakto\nu$ as $n\to+\infty$, if for any $f\in\rmC([a,b];\R)$ bounded
$ \int_X f(x)\d\nu_n(x)\to\int_X f(x)\d\nu(x),\quad\text{as }n\to+\infty .$
}.

Given $Y$ separable Banach space, $r:X\to Y$ a Borel measurable map and $\nu\in\prob(X)$, we denote by $r_\sharp\nu\in\prob(Y)$ the \emph{push-forward} measure defined by
\[\int_Y f(y)\d(r_\sharp\nu)(y):=\int_Xf(r(x))\d\nu(x),\]
for any $f:Y\to[0,+\infty)$ Borel measurable function.
Given $\nu\in\prob(X)$, we denote by $ \m{\nu}$  its \emph{$2$-moment}
\[\m{\nu}:=\left(\int_X |x|^2\d\nu(x)\right)^{\frac{1}{2}}\]
and  by $\prob_2(X)$ the set of  $\nu\in\prob(X)$ such that $\m{\nu}<+\infty$. On this space, we consider the Wasserstein metric $W_2$ defined by
\begin{equation}\label{eq:W2}
W_2(\nu_1,\nu_2):=\left(\inf_{\ssigma\in\Gamma(\nu_1,\nu_2)}\int_{X\times X}|x_1-x_2|^2\d\ssigma(x_1,x_2)\right)^{1/2},
\end{equation}
where $\Gamma(\nu_1,\nu_2)$ denotes the set of admissible couplings/plans between $\nu_1$ and $\nu_2$, given by
\[\Gamma(\nu_1,\nu_2):=\left\{\ssigma\in\prob(X\times X)\,:\,\pi^1_\sharp\ssigma=\nu_1,\,\pi^2_\sharp\ssigma=\nu_2\right\},\]
and $\pi^i:X\times X\to X$, $i=1,2$, are the projections maps defined by
$\pi^i(x_1,x_2)=x_i,\quad i=1,2.$
The set of optimal couplings realizing the infimum in \eqref{eq:W2} is denoted by $\Gamma_o(\nu_1,\nu_2)$. We recall that the topology induced by the $W_2$ metric on $\prob_2(X)$ is stronger than the narrow topology. (cf. e.g. \cite[Remark 7.1.11]{ags}).

We also recall that a map $T:\R^d\to\R^d$ is an \emph{optimal transport map} between $\nu _1 \in\prob_2(\R^d)$ and $\nu _2\in\prob_2(\R^d)$ if 
  $\nu _2 = T_\sharp \nu _1$  and the induced plan is optimal, i.e. $(\mathrm{id}_{\R^d},T)_\sharp \nu _1 \in\Gamma_o (\nu _1 , \nu _2)$.

\subsection{Trajectories of the control system }\label{sec:adm}
We introduce the controlled non-local dynamics under consideration in the form of a differential inclusion. In particular, in this section we provide assumptions on the involved set-valued map $\F:\R^d\times\prob_2(\R^d)\tto\R^d$ that will be required later on, we give the definition of admissible trajectories in the Wasserstein space $(\prob_2(\R^d),W_2)$ and of associated Hamiltonian. This is the setting considered in \cite{JMQ20} (cf. also \cite{JMQ21,CMQ,CMP,CLOS}).

\medskip

Denote by $\F:\R^d\times\prob_2(\R^d)\tto\R^d$ the set-valued vector field involved in the controlled dynamics under study. Whenever specified, we work under the following assumptions, where on the space $\R^d\times\prob_2(\R^d)$ we consider the metric
\[d\left((x_1,\nu_1),(x_2,\nu_2)\right):=|x_1-x_2|+W_2(\nu_1,\nu_2),\qquad (x_i,\nu_i)\in\R^d\times\prob_2(\R^d),\,i=1,2.\]
\begin{hypo}\label{hypo}\
\begin{enumerate}
\item\label{itemh1} $\F$ has convex, compact and nonempty images;
\item\label{item:h2} $\F$ is $L$-Lipschitz continuous, i.e. there exists $L>0$ such that
\[\F(x_1,\nu_1)\subset \F(x_2,\nu_2)+L\,d\left((x_1,\nu_1),(x_2,\nu_2)\right)\,\overline{B(0,1)},\]
for any $(x_i,\nu_i)\in\R^d\times\prob_2(\R^d)$, $i=1,2$.
\end{enumerate}
\end{hypo}

Denoted by
\begin{equation}\label{eq:KF}
K_\F:=\max_{v\in\F(0,\delta_0)}|v|,
\end{equation}
thanks to \eqref{item:h2}, we get the following  estimate 
\begin{equation}\label{eq:growthF}
\F(x,\nu)\subset \F(0,\delta_0)+L\left( \m{\nu}+|x|\right)\,\overline{B(0,1)}\subset
C(1+\m{\nu})\,(1+|x|) \,\overline{B(0,1)},
\end{equation}
for any $(x,\nu)\in\R^d\times\prob_2(\R^d)$, where {\color{black}$C:=\max\{L,K_\F\}$}.

We use the notation $\mu=(\mu_t)_{t\in I}$ to denote a curve $\mu:I\to\prob_2(\R^d)$, where $I$ is a given interval.
\begin{definition}[Admissible trajectories]\label{def-macro}
Let $I=[a,b]\subset\R$, $\bar\mu\in\prob_2(\R^d)$ and $\mu=(\mu_t)_{t\in I}\subset \prob_2(\R^d)$ be an absolutely continuous curve. We say that $\mu$ is an \emph{admissible trajectory starting from $\bar\mu$}, and we write $\mu\in\adm{I}{\bar{\mu}}$, if there exists a Borel measurable vector field $v:I\times\R^d\to\R^d$, $v_t\in L^2_{\mu_t}(\R^d;\R^d)$ for a.e. $t\in I$, satisfying
\begin{itemize}
\item $v_t(x)\in \F(x,\mu_t)$ for a.e. $t\in I$ and $\mu_t$-a.e. $x\in \mathbb R^d$;
\item $\partial_t\mu_t+\mathrm{div}(v_t\mu_t)=0$ in the sense of distributions on $I\times\R^d$, equivalently 
\[\dfrac{d}{dt}\int_{\R^d}\phi(x)\d\mu_t(x)=\int_{\R^d}\langle\nabla\phi(x),v_t(x)\rangle\d\mu_t(x),\]
for a.e. $t\in I$ and all $\phi\in C^1_c(\R^d)$.
\end{itemize}
We say that a locally absolutely continuous curve $\mu=(\mu_t)_{t\in[0,+\infty)}$ is admissible starting from $\bar{\mu}$, and we write $\mu\in\adm{[0,+\infty)}{\bar{\mu}}$, if the restriction $\mu|_{[0,T]}$ of $\mu$ to the compact interval $[0,T]$ is admissible starting from $\bar{\mu}$ for any $T>0$.
\end{definition}

We postpone to Appendix \ref{app0} the computation of standard a priori estimates on the $2$-moments along admissible trajectories.

\section{Hamilton Jacobi Bellman equation}\label{sec:HJB}
The definition of Lyapunov function requires a suitable notion of variational inequality related to Hamilton-Jacobi equation.  Such equation is solved by the  value function of the following  Mayer-type optimal control problem \cite{JMQ20}.
 Given a Lipschitz continuous  final cost $g:\prob_2(\R^d)\to\R$, we define the value function $U_g:[a,b]\times\prob_2(\R^d)\to\R$ by
\[U_g(t,\nu):=\inf\left\{g(\mu_{b})\,:\,\mu\in\adm{[t,b]}{\nu}\right\}.\]
Under Hypothesis \ref{hypo}, it is possible to prove the existence of optimal trajectories (cf. \cite[Corollary 2]{JMQ20}) and a Dynamic Programming Principle (cf. \cite[Proposition 3]{JMQ20}), which states in particular that the map
\[t\mapsto U_g(t,\mu_t)\]
is non decreasing in $[a,b]$ for any $\mu=(\mu_t)_{t\in[a,b]}\in\adm{[a,b]}{\bar{\mu}}$, and it is constant along optimal trajectories.

\medskip
We introduce  the following \emph{Hamilton-Jacobi-Bellman} equation in $[a,b]\times\prob_2(\R^d)$
(with  $a<b$), 
\begin{equation}\label{eq:HJBintro}
\partial_t U(t,\nu)+\H(\nu,D_\nu U(t,\nu))=0,
\end{equation}
where $\H(\nu,p)$ is defined for any $\nu\in\prob_2(\R^d)$, $p\in L^2_\nu(\R^d)$.
Since $U:[a,b]\times\prob_2(\R^d)\to\R$ may not be regular enough, equation \eqref{eq:HJBintro} has to be interpreted in a viscosity sense which we will specify later on in Section \ref{sec:visco}.

In the case of the Mayer problem, we will consider  the following  Hamiltonian associated to the set-valued map $\F$
\begin{equation}\label{eq:H}
\HF(\nu,p):=\int_{\R^d}\inf_{v\in\F(x,\nu)}\langle p(x),v\rangle \d\nu(x),\qquad p\in L^2_\nu(\R^d),\,\nu\in\prob_2(\R^d).
\end{equation}

\subsection{Sub/super-differentials}\label{sec:diff}
	
Before giving the definition of sub/super-differential in $\R^d\times\prob_2(\R^d)$ or $\prob_2(\R^d)$, taken from \cite{JMQ20,JMQ21} (cf. also \cite{ags, Jimenez}), we need to recall the following.
\begin{definition}[Optimal displacement]\label{def:optdis}
Let $\bar{\nu}\in\prob_2(\R^d)$. A function $p\in L^2_{\bar{\nu}}(\R^d)$ is called \emph{optimal displacement from $\bar{\nu}$} if there exists $\lambda>0$ and an optimal transport map $T:\R^d\to\R^d$ between $\bar{\nu}$ and $T_\sharp\bar{\nu}$ such that
\[p=\lambda(T-\mathrm{id}_{\R^d}).\]
\end{definition}
\begin{remark}
We recall the following equivalent characterization of optimal displacement (\cite[Lemma 4]{JMQ20}): $p$ is an \emph{optimal displacement from $\bar{\nu}\in\prob_2(\R^d)$} if there exist $\lambda>0$, $\xi\in\prob_2(\R^d)$ and $\ssigma\in\Gamma_o(\bar{\nu},\xi)$ such that
\[p(x)=\lambda\left(\int_{\R^d} y\d\sigma_x(y)-x\right),\]
where $\{\sigma_x\}_{x\in\R^d}\subset\prob_2(\R^d)$ is the family obtained by disintegrating $\ssigma$ w.r.t. the projection on the first marginal $\bar\nu$ (cf. \cite[Theorem 5.3.1]{ags}).
\end{remark}

\begin{definition}[Sub/super-differential in $\R\times\prob_2(\R^d)$]\label{def:diff}
Let $a,b\in\R$, $a<b$, and consider the map $U:[a,b]\times\prob_2(\R^d)\to\R$. Given $(\bar{t},\bar{\nu})\in[a,b]\times\prob_2(\R^d)$ and $\delta\ge0$, we say that the pair $(p_{\bar{t}},p_{\bar{\nu}})\in\R\times L^2_{\bar{\nu}}(\R^d)$ is a \emph{(total) $\delta$-viscosity subdifferential} of $U$ at $(\bar{t},\bar{\nu})$, and we write $(p_{\bar{t}},p_{\bar{\nu}})\in\boldsymbol{D}^-_\delta U(\bar{t},\bar{\nu})$, if
\begin{enumerate}
\item $p_{\bar{\nu}}$ is an optimal displacement from $\bar{\nu}$;
\item for all $(t,\nu)\in[a,b]\times\prob_2(\R^d)$ and $\ssigma\in\Gamma(\bar{\nu},\nu)$,
\begin{equation*}
U(t,\nu)-U(\bar{t},\bar{\nu})\ge p_{\bar{t}}\,(t-\bar{t})+\int_{\R^d\times\R^d}\langle p_{\bar{\nu}}(x),y-x\rangle \d\ssigma(x,y)
-\delta\cdot\Delta_{t,\ssigma}+o\left(\Delta_{t,\ssigma}\right),
\end{equation*}
where $\Delta_{t,\ssigma}:=\sqrt{|t-\bar{t}|^2+\int |x-y|^2\d\ssigma(x,y)}$.
\end{enumerate}
Analogously, the set of \emph{(total) $\delta$-viscosity superdifferentials} of $U$ at $(\bar{t},\bar{\nu})$ is given by $\boldsymbol{D}^+_\delta U(\bar{t},\bar{\nu}):=-\boldsymbol{D}^-_\delta (-U)(\bar{t},\bar{\nu})$.
\end{definition}

If the map $U$ does not depend on $t$ or it is constant w.r.t. $t$ the above definition becomes
\begin{definition}[Sub/super-differential in $\prob_2(\R^d)$]\label{def:diffP2}
Let $U:\prob_2(\R^d)\to\R$. Given $\bar{\nu}\in\prob_2(\R^d)$ and $\delta\ge0$, we say that $p\in L^2_{\bar{\nu}}(\R^d)$ is a \emph{$\delta$-viscosity subdifferential} of $U$ at $\bar{\nu}$, and we write $p \in\partial^-_\delta U(\bar{\nu})$, if
\begin{enumerate}
\item $p$ is an optimal displacement from $\bar{\nu}$;
\item for all $\nu\in\prob_2(\R^d)$ and $\ssigma\in\Gamma(\bar{\nu},\nu)$,
\begin{equation*}
U(\nu)-U(\bar{\nu})\ge \int_{\R^d\times\R^d}\langle p(x),y-x\rangle \d\ssigma(x,y)
-\delta\cdot\Delta_{\ssigma}+o\left(\Delta_{\ssigma}\right),
\end{equation*}
where $\Delta_{\ssigma}:=\left(\int |x-y|^2\d\ssigma(x,y)\right)^{1/2}$.
\end{enumerate}
Analogously, the set of \emph{$\delta$-viscosity superdifferentials} of $U$ at $\bar{\nu}$ is given by $\partial^+_\delta U(\bar{\nu}):=-\partial^-_\delta (-U)(\bar{\nu})$.
\end{definition}

In case $U:[a,b]\times\prob_2(\R^d)\to\R$ is partially differentiable w.r.t. $t$, in the following proposition we prove a relation between the pair composed by the (partial) derivative in $(a,b)$ and the (partial) $\delta$-differential in $\prob_2(\R^d)$, with the pair of total $\delta$-differential in $(a,b)\times\prob_2(\R^d)$.
\begin{proposition}\label{prop:incldiff2.0}
Let $a,b\in\R$, $a<b$, $(\bar{t},\bar{\nu})\in(a,b)\times\prob_2(\R^d)$ and $\delta\ge0$. Let $U:[a,b]\times\prob_2(\R^d)\to\R$ be partially differentiable w.r.t. $t\in(a,b)$ at $(\bar{t},\bar{\nu})$, so that there exists $\partial_t U(\bar{t},\bar{\nu})$. Then
\begin{equation}\label{eq:incldiff2-}
(p_{\bar{t}},p_{\bar{\nu}})\in\boldsymbol{D}^-_\delta U(\bar{t},\bar{\nu}) \Rightarrow
\begin{cases}
p_{\bar{t}}\in \overline{B\left(\partial_t U(\bar{t},\bar{\nu}),\delta\right)},\\
p_{\bar{\nu}}\in\partial^-_\delta U(\bar{t},\bar{\nu}).
\end{cases}
\end{equation}
Analogously, we have
\begin{equation}\label{eq:incldiff2+}
(p_{\bar{t}},p_{\bar{\nu}})\in\boldsymbol{D}^+_\delta U(\bar{t},\bar{\nu}) \Rightarrow
\begin{cases}
p_{\bar{t}}\in \overline{B\left(\partial_t U(\bar{t},\bar{\nu}),\delta\right)},\\
p_{\bar{\nu}}\in\partial^+_\delta U(\bar{t},\bar{\nu}).
\end{cases}
\end{equation}
\end{proposition}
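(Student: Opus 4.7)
The plan is to obtain each of the two conclusions separately by specializing the definition of total $\delta$-subdifferential along one of the two ``axes'' (time, or measure), using the ``plug-in'' freedom provided by the choice of $(t,\nu,\ssigma)$. Throughout, I treat the subdifferential case; the superdifferential case follows by applying it to $-U$.

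First, to get the measure-coordinate inclusion, I would set $t=\bar t$ in item (2) of Definition \ref{def:diff}. For every $\nu\in\prob_2(\R^d)$ and every $\ssigma\in\Gamma(\bar\nu,\nu)$ one has $\Delta_{\bar t,\ssigma}=\Delta_\ssigma$, so the defining inequality of $\boldsymbol{D}^-_\delta U(\bar t,\bar\nu)$ reduces verbatim to
\[U(\bar t,\nu)-U(\bar t,\bar\nu)\ge \int_{\R^d}\langle p_{\bar\nu}(x),y-x\rangle\,\d\ssigma(x,y)-\delta\,\Delta_\ssigma+o(\Delta_\ssigma).\]
Since $p_{\bar\nu}$ is already required to be an optimal displacement from $\bar\nu$ by definition of $\boldsymbol{D}^-_\delta$, this is precisely the definition of $p_{\bar\nu}\in\partial^-_\delta U(\bar t,\cdot)(\bar\nu)$.

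Second, to control $p_{\bar t}$, I would choose $\nu=\bar\nu$ and $\ssigma=(\mathrm{id}_{\R^d},\mathrm{id}_{\R^d})_\sharp\bar\nu$, the diagonal plan. Then $\int_{\R^d}\langle p_{\bar\nu}(x),y-x\rangle\,\d\ssigma=0$, $\Delta_{t,\ssigma}=|t-\bar t|$, and the subdifferential inequality becomes
\[U(t,\bar\nu)-U(\bar t,\bar\nu)\ge p_{\bar t}(t-\bar t)-\delta|t-\bar t|+o(|t-\bar t|),\qquad t\in[a,b].\]
Dividing by $t-\bar t>0$ and letting $t\to \bar t^+$, the existence of $\partial_t U(\bar t,\bar\nu)$ yields $\partial_t U(\bar t,\bar\nu)\ge p_{\bar t}-\delta$. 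Doing the same for $t-\bar t<0$ (so that $|t-\bar t|=-(t-\bar t)$ and the division flips the inequality) and letting $t\to\bar t^-$ gives $\partial_t U(\bar t,\bar\nu)\le p_{\bar t}+\delta$. Combining, $|p_{\bar t}-\partial_t U(\bar t,\bar\nu)|\le\delta$, i.e. $p_{\bar t}\in\overline{B(\partial_t U(\bar t,\bar\nu),\delta)}$.

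Finally, the superdifferential statement \eqref{eq:incldiff2+} follows immediately by applying \eqref{eq:incldiff2-} to the function $-U$, which is still partially differentiable in $t$ with $\partial_t(-U)(\bar t,\bar\nu)=-\partial_t U(\bar t,\bar\nu)$, and then using the very definitions $\boldsymbol{D}^+_\delta U=-\boldsymbol{D}^-_\delta(-U)$ and $\partial^+_\delta U=-\partial^-_\delta(-U)$. No real obstacle is expected here: the argument is essentially a bookkeeping exercise that exploits the independence of the two test variables $t$ and $\ssigma$ in the definition, and the fact that the remainder $o(\Delta_{t,\ssigma})$ collapses to $o(|t-\bar t|)$, resp.\ $o(\Delta_\ssigma)$, in each of the two specializations.
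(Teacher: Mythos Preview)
Your proof is correct and follows essentially the same approach as the paper: both specialize the subdifferential inequality first along the measure axis (setting $t=\bar t$) and then along the time axis (taking $\nu=\bar\nu$ with the diagonal plan), with the only cosmetic differences being the order of the two steps and that the paper invokes a Taylor expansion where you take one-sided limits directly.
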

\begin{proof}
We prove \eqref{eq:incldiff2-} since the proof of \eqref{eq:incldiff2+} is analogous.
Let $(p_{\bar{t}},p_{\bar{\nu}})\in\boldsymbol{D}^-_\delta U(\bar{t},\bar{\nu})$. Then, by Definition \ref{def:diff}, $p_{\bar{\nu}}$ is an optimal displacement from $\bar{\nu}$ and for any $(t,\nu)\in[a,b]\times\prob_2(\R^d)$ and $\ssigma\in\Gamma(\bar{\nu},\nu)$, we have
\begin{equation}\label{eq:proofincldiff2-}
U(t,\nu)-U(\bar{t},\bar{\nu})\ge p_{\bar{t}}\,(t-\bar{t})+\int_{\R^d\times\R^d}\langle p_{\bar{\nu}}(x),y-x\rangle \d\ssigma(x,y)
-\delta\cdot\Delta_{t,\ssigma}+o\left(\Delta_{t,\ssigma}\right).
\end{equation}

We first prove that $p_{\bar{t}}\in \overline{B\left(\partial_t U(\bar{t},\bar{\nu}),\delta\right)}$: by choosing $\nu=\bar{\nu}$ and $\ssigma=\left(\mathrm{id},\mathrm{id}\right)_\sharp\bar{\nu}$ in \eqref{eq:proofincldiff2-}, we get
\[U(t,\bar{\nu})-U(\bar{t},\bar{\nu})\ge p_{\bar{t}}\,(t-\bar{t})
-\delta |t-\bar{t}|+o\left(|t-\bar{t}|\right),\]
for any $t\in[a,b]$.
By partial differentiability of $U$ w.r.t. $t$, we can expand by Taylor's formula the left-hand side and get
\[\partial_t U(\bar{t},\bar{\nu})\cdot(t-\bar{t})+o\left(|t-\bar{t}|\right)\ge p_{\bar{t}}\,(t-\bar{t})
-\delta |t-\bar{t}|+o\left(|t-\bar{t}|\right),\]
for any $t\in[a,b]$. Thus, dividing by $|t-\bar{t}|$ and letting $|t-\bar{t}|\to0$, we finally obtain
\[\partial_t U(\bar{t},\bar{\nu})-\delta\le p_{\bar{t}}\le \partial_t U(\bar{t},\bar{\nu})+\delta.\]

It remains to prove that $p_{\bar{\nu}}\in\partial^-_\delta U(\bar{t},\bar{\nu})$. By choosing $t=\bar{t}$ in \eqref{eq:proofincldiff2-}, we have
\[U(\bar{t},\nu)-U(\bar{t},\bar{\nu})\ge \int_{\R^d}\langle p_{\bar{\nu}}(x),y-x\rangle \d\ssigma(x,y)
-\delta\cdot\Delta_{\ssigma}+o\left(\Delta_{\ssigma}\right),\]
for any $\nu\in\prob_2(\R^d)$ and $\ssigma\in\Gamma(\bar{\nu},\nu)$.
By Definition \ref{def:diffP2}, we conclude.
\end{proof}

\begin{remark}\label{rmk:problem1}
In particular, if $\delta=0$, by Proposition \ref{prop:incldiff2.0}, we have
\begin{equation*}
\boldsymbol{D}^-_0 U(\bar{t},\bar{\nu})\subset\left\{(\partial_t U(\bar{t},\bar{\nu}),p)\,:\,p\in\partial^-_0 U(\bar{t},\bar{\nu})\right\},
\end{equation*}
and the inclusion may be strict.
\end{remark}

\subsection{Viscosity solutions and the Comparison principle}\label{sec:visco}

Using the above notion of sub/super differential, we are able to give a meaning of viscosity solution to the Hamilton Jacobi Bellman equation \eqref{eq:HJBintro}.

\begin{definition}[Viscosity solutions]\label{def:viscosol}
A function $U:[a,b]\times\prob_2(\R^d)\to\R$ is
\begin{enumerate}
\item a \emph{subsolution} of \eqref{eq:HJBintro} if $U$ is u.s.c. and there exists a map $C:\prob_2(\R^d)\to(0,+\infty)$, $C(\cdot)$ bounded on bounded sets, such that
\[p_t+\H(\nu,p_\nu)\ge -C(\nu)\delta,\]
for all $(t,\nu)\in[a,b]\times\prob_2(\R^d)$, $(p_t,p_\nu)\in\boldsymbol{D}^+_\delta U(t,\nu)$ and $\delta>0$;
\item a \emph{supersolution} of \eqref{eq:HJBintro} if $U$ is l.s.c. and there exists a map $C:\prob_2(\R^d)\to(0,+\infty)$, $C(\cdot)$ bounded on bounded sets, such that
\[p_t+\H(\nu,p_\nu)\le C(\nu)\delta,\]
for all $(t,\nu)\in[a,b]\times\prob_2(\R^d)$, $(p_t,p_\nu)\in\boldsymbol{D}^-_\delta U(t,\nu)$ and $\delta>0$;
\item a \emph{solution} of  \eqref{eq:HJBintro} if $U$ is both a supersolution and a subsolution.
\end{enumerate}
\end{definition}

We state here {\color{black}in Theorem \ref{thm:comparison}} the Comparison Principle for \eqref{eq:HJBintro} that will be used in this paper {\color{black} and whose proof is postponed to Appendix \ref{sec:appA}. 
The result involves a suitable Lipschitz continuity notion that we call $\mm$-local Lipschitzianity.}
\begin{definition}[{\color{black}$\mm$-local Lipschitzianity}]\label{def:locLip}
Consider a map $\phi:\prob_2(\R^d)\to\R$. We say that 
\begin{enumerate}
\item\label{item:m2lip} $\phi$ is \emph{$\mm$-locally Lipschitz} if for any $R>0$ there exists $L_R>0$ such that
\begin{equation}\label{eq:lipphi}
|\phi(\nu_1)-\phi(\nu_2)|\le L_R\, W_2(\nu_1,\nu_2),
\end{equation}
for any $\nu_i\in\prob_2(\R^d)$ such that $\m{\nu_i}\le R$, $i=1,2$;
\item\label{item:loclip}  $\phi$ is \emph{locally Lipschitz continuous} if for any $\nu_1\in\prob_2(\R^d)$ there exist $R_{\nu_1}>0$ and $L_{\nu_1}>0$ such that 
\[|\phi(\nu_1)-\phi(\nu_2)|\le L_{\nu_1}\, W_2(\nu_1,\nu_2),\]
for any $\nu_2\in\prob_2(\R^d)$ such that $W_2(\nu_1,\nu_2)\le R_{\nu_1}$;
\item $\phi$ is \emph{Lipschitz continuous} if there exists $L>0$ such that
\[|\phi(\nu_1)-\phi(\nu_2)|\le L\, W_2(\nu_1,\nu_2),\]
 for any $\nu_i\in\prob_2(\R^d)$, $i=1,2$.
 \end{enumerate}
\end{definition}

\begin{remark}\label{rmk:m2}
Notice that, according to Definition \ref{def:locLip}, if $\phi$ satisfies either \eqref{item:m2lip} or \eqref{item:loclip}, then $\phi$ is continuous in the $W_2$-metric. Furthermore, if $\phi:\prob_2(\R^d)\to\R$ is Lipschitz continuous, then it is trivially $\mm$-locally Lipschitz. The viceversa does not hold. As an example, the function $\tilde\phi:\prob_2(\R^d)\to\R$, defined by
$\tilde\phi(\nu):=\frac{1}{2}W_2^2(\nu,\delta_0)=\frac{1}{2}\mm^2(\nu),$
is not Lipschitz continuous, however it is $\mm$-locally Lipschitz. Indeed, by triangular inequality we have for any $\nu_i\in\prob_2(\R^d)$, $i=1,2$,
\begin{align*}
|\tilde\phi(\nu_1)-\tilde\phi(\nu_2)|&\le \frac{1}{2}\left|\m{\nu_1}-\m{\nu_2}\right|\left(\m{\nu_1}+\m{\nu_2}\right)\\
&\le\frac{1}{2}W_2(\nu_1,\nu_2)\left(\m{\nu_1}+\m{\nu_2}\right).
\end{align*}
\end{remark}

We have also the following relation.
\begin{lemma}
If $\phi:\prob_2(\R^d)\to\R$ is $\mm$-locally Lipschitz, then it is locally Lipschitz continuous.
\end{lemma}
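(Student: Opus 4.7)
The plan is straightforward: given a fixed $\nu_1 \in \prob_2(\R^d)$, I need to produce a neighborhood of $\nu_1$ in the $W_2$ topology on which $\phi$ has a single Lipschitz constant. The $\mm$-local Lipschitz hypothesis already hands me Lipschitz constants on sublevel sets of the second moment, so the only work is to show that a small $W_2$-ball around $\nu_1$ is contained in such a sublevel set.

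First, I would fix $\nu_1$ and set $R_{\nu_1} := 1$ (any positive constant works). For any $\nu_2 \in \prob_2(\R^d)$ with $W_2(\nu_1,\nu_2) \le R_{\nu_1}$, the key observation is that $\mm$ is $1$-Lipschitz with respect to $W_2$: since $\mm(\nu) = W_2(\nu,\delta_0)$, the triangle inequality for $W_2$ yields
\[
\bigl|\mm(\nu_2) - \mm(\nu_1)\bigr| = \bigl|W_2(\nu_2,\delta_0) - W_2(\nu_1,\delta_0)\bigr| \le W_2(\nu_1,\nu_2) \le R_{\nu_1},
\]
so $\mm(\nu_2) \le \mm(\nu_1) + R_{\nu_1} =: R$.

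Then both $\nu_1$ and $\nu_2$ lie in the second-moment sublevel set of radius $R$ (since $\mm(\nu_1) \le R$ trivially), and applying the $\mm$-local Lipschitz property with this $R$ gives a constant $L_R > 0$ such that
\[
|\phi(\nu_1) - \phi(\nu_2)| \le L_R\, W_2(\nu_1,\nu_2).
\]
Setting $L_{\nu_1} := L_R$ completes the argument, since $L_R$ depends only on $\nu_1$ through $\mm(\nu_1)$ and the fixed choice $R_{\nu_1}=1$.

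There is essentially no obstacle here: the whole content of the lemma is the $1$-Lipschitz behavior of the $2$-moment under $W_2$, which follows from the triangle inequality applied to $\delta_0$. Everything else is a direct invocation of the hypothesis.
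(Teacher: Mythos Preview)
Your proof is correct and follows essentially the same route as the paper: fix a radius for the $W_2$-ball around $\nu_1$, use the triangle inequality with $\delta_0$ to bound $\m{\nu_2}$ by $R:=\m{\nu_1}+R_{\nu_1}$, and then invoke the $\mm$-local Lipschitz hypothesis with that $R$. The only cosmetic difference is that the paper leaves the ball radius $R_1>0$ arbitrary while you fix $R_{\nu_1}=1$.
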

\begin{proof}
Let $\nu_1\in\prob_2(\R^d)$ and fix $R_1>0$. Define $R:=R_1+\m{\nu_1}$ and $L_R$ be as in Definition \ref{def:locLip}\eqref{item:m2lip}. Then, taken $\nu_2\in\prob_2(\R^d)$ such that $W_2(\nu_1,\nu_2)\le R_1$, we have
\[\m{\nu_2}=W_2(\nu_2,\delta_0)\le W_2(\nu_1,\nu_2)+\m{\nu_1}\le R_1+\m{\nu_1}=R.\]
We can thus apply the $\mm$-local Lipschitzianity of $\phi$ to conclude
\[|\phi(\nu_1)-\phi(\nu_2)|\le L_R\, W_2(\nu_1,\nu_2).\]
\end{proof}

\begin{theorem}[Comparison]\label{thm:comparison}
Let $u_i:[0,T]\times\prob_2(\R^d)\to\R$, $i=1,2$, be such that
\begin{enumerate}
\item\label{item:comp1} $t\mapsto u_i(t,\nu)$ is continuous in $[0,T]$ for all $\nu\in\prob_2(\R^d)$, $i=1,2$;
\item\label{item:comp2} $\nu\mapsto u_i(t,\nu)$ is $\mm$-locally Lipschitz in $\prob_2(\R^d)$, uniformly w.r.t. $t$, i.e. 
for any $R>0$ there exists $L_{R,i}>0$ such that
\begin{equation*}
|u_i(t,\nu_1)-u_i(t,\nu_2)|\le L_{R,i}\, W_2(\nu_1,\nu_2),
\end{equation*}
for any $\nu_i\in\prob_2(\R^d)$ such that $\m{\nu_i}\le R$, $i=1,2$, for any $t\in[0,T]$.
\item\label{item:comp3} $u_1$ and $u_2$ are respectively a viscosity subsolution and supersolution of 
\begin{equation}\label{eq:HJBcomp}
\partial_t U(t,\nu)+\H(\nu,D_\nu U(t,\nu))=0
\end{equation}
according with Definition \ref{def:viscosol}, where $\H(\nu,p)$ is defined for any $\nu\in\prob_2(\R^d)$, $p\in L^2_\nu(\R^d)$.
\end{enumerate}
Assume that there exists a continuous nondecreasing map $\omega_{\H}:\R^2\to[0,+\infty)$ such that $\omega_{\H}(0,0)=0$ and
\begin{equation}\label{eq:assHCP}
|\H(\nu_1,\lambda\, p_{\nu_1,\nu_2})-\H(\nu_2,\lambda\, q_{\nu_1,\nu_2})|\le \omega_{\H}\left(W_2(\nu_1,\nu_2),\,\lambda\,W_2^2(\nu_1,\nu_2)\right),
\end{equation}
for all $\lambda>0$, $\nu_1,\nu_2\in\prob_2(\R^d)$, with $p_{\nu_1,\nu_2}$ and $q_{\nu_1,\nu_2}$ as in \eqref{def:pq}. Then
\[\inf_{(t,\nu)\in[0,T]\times\prob_2(\R^d)}\left\{u_2(t,\nu)-u_1(t,\nu)\right\}=\inf_{\nu\in\prob_2(\R^d)}\left\{u_2(T,\nu)-u_1(T,\nu)\right\}.\]
\end{theorem}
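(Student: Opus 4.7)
My plan is to argue by contradiction via the doubling-of-variables method of Crandall--Ishii--Lions, adapted to the Wasserstein framework with the $\delta$-sub/superdifferentials of Definitions~\ref{def:diff}--\ref{def:diffP2}. Suppose the conclusion fails: there exist $\eta>0$ and $(t_0,\nu_0)\in[0,T)\times\prob_2(\R^d)$ with $(u_1-u_2)(t_0,\nu_0)>\sup_{\nu\in\prob_2(\R^d)}(u_1-u_2)(T,\nu)+3\eta$. To produce a strict inequality later on, I would first replace $u_1$ by $\tilde u_1(t,\nu):=u_1(t,\nu)-\eta'(T-t)$ for a small $\eta'>0$: this is a \emph{strict} viscosity subsolution, in the sense that every $(p_t,p_\nu)\in \boldsymbol{D}^+_\delta\tilde u_1(\bar t,\bar\nu)$ satisfies $p_t+\H(\bar\nu,p_\nu)\ge\eta'-C(\bar\nu)\delta$ (the shift $(p_t,p_\nu)\mapsto(p_t-\eta',p_\nu)$ lies in $\boldsymbol{D}^+_\delta u_1(\bar t,\bar\nu)$), and for $\eta'$ small it still satisfies $\sup(\tilde u_1-u_2)>\sup_T(\tilde u_1-u_2)+2\eta$.

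For parameters $\eps,\alpha>0$ I would then study the penalized functional
\[
\Psi_{\eps,\alpha}(t,s,\nu,\mu):=\tilde u_1(t,\nu)-u_2(s,\mu)-\frac{(t-s)^2+W_2^2(\nu,\mu)}{2\eps}-\alpha\bigl(\m{\nu}^2+\m{\mu}^2\bigr)
\]
on $[0,T]^2\times\prob_2(\R^d)^2$. The moment penalty offsets the $\mm$-locally Lipschitz growth of $u_1,u_2$, so almost-maximizing sequences are tight; combined with narrow compactness of Wasserstein balls and the u.s.c./l.s.c.\ hypotheses, the supremum is attained at some $(\bar t,\bar s,\bar\nu,\bar\mu)$. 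Evaluating at $(t_0,t_0,\nu_0,\nu_0)$ gives $\sup\Psi_{\eps,\alpha}\ge\sup_T(\tilde u_1-u_2)+2\eta$ once $\alpha$ is small, while the continuity of $u_i$ in $t$ and the terminal bound $\tilde u_1(T,\cdot)-u_2(T,\cdot)\le\sup_T(\tilde u_1-u_2)$ force $\bar t,\bar s\in[0,T)$ for $\eps,\alpha$ small enough.

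At the maximizer I would extract sub- and superdifferentials. Fixing $(\bar s,\bar\mu)$ and picking $\bar\ssigma\in\Gamma_o(\bar\nu,\bar\mu)$ with barycentric projection $\bar T(x):=\int y\,\d\bar\ssigma_x(y)$, the standard expansion of the $W_2^2$ penalty via the remark following Definition~\ref{def:optdis} yields
\[
\bigl(p_t^1,p_\nu^1\bigr):=\Bigl(\tfrac{\bar t-\bar s}{\eps},\;\tfrac{1}{\eps}(\mathrm{id}-\bar T)+2\alpha\,\mathrm{id}\Bigr)\in \boldsymbol{D}^+_{\delta_1}\tilde u_1(\bar t,\bar\nu),
\]
with $\delta_1=O(\alpha\m{\bar\nu})$ absorbing the linear remainder from the moment term and the higher-order $W_2^2$ remainder fitting into the $o(\Delta_{t,\ssigma})$ tail. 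The critical structural check is that $-p_\nu^1$ is an optimal displacement: writing $-p_\nu^1=(1/\eps+2\alpha)(T'-\mathrm{id})$ with $T':=((1/\eps)\bar T+2\alpha\,\mathrm{id})/(1/\eps+2\alpha)$, the map $T'$ is a positive convex combination of two gradients of convex functions ($\bar T$ by Brenier and $\mathrm{id}$ trivially), hence itself the gradient of a convex function, therefore optimal between $\bar\nu$ and $T'_\sharp\bar\nu$. A symmetric argument using the reverse plan and its barycentric projection $\bar T^{-1}$ produces $(p_s^2,p_\mu^2):=((\bar t-\bar s)/\eps,\,(1/\eps)(\bar T^{-1}-\mathrm{id})-2\alpha\,\mathrm{id})\in \boldsymbol{D}^-_{\delta_2}u_2(\bar s,\bar\mu)$ with $\delta_2=O(\alpha\m{\bar\mu})$.

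Finally, feeding these into the strict-subsolution inequality for $\tilde u_1$ and the supersolution inequality for $u_2$ (Definition~\ref{def:viscosol}) and subtracting — the matching $(\bar t-\bar s)/\eps$ terms cancel — I would obtain $\H(\bar\nu,p_\nu^1)-\H(\bar\mu,p_\mu^2)\ge\eta'-C(\bar\nu)\delta_1-C(\bar\mu)\delta_2$. Assumption~\eqref{eq:assHCP}, with $\lambda=1/\eps+2\alpha$ and the displacements above identified with $p_{\bar\nu,\bar\mu},\,q_{\bar\nu,\bar\mu}$ from \eqref{def:pq} up to the $2\alpha\,\mathrm{id}$ correction (absorbable into a slightly enlarged modulus), supplies the opposite bound $\omega_\H(W_2(\bar\nu,\bar\mu),\lambda W_2^2(\bar\nu,\bar\mu))$. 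The Wasserstein Crandall--Ishii estimate, obtained by comparing $\sup\Psi_{\eps,\alpha}$ with its value on the diagonal $\{t=s,\,\nu=\mu\}$, forces $W_2^2(\bar\nu,\bar\mu)/\eps\to0$ as $\eps\to 0$, so $\omega_\H\to 0$; a subsequent $\alpha\to 0$ sends $\delta_i\to 0$, and the limiting inequality $0\ge\eta'>0$ is the required contradiction. The main obstacle I anticipate is the structural verification in the third paragraph that, after the $\alpha$-perturbation needed for compactness, the candidate differentials still meet the rigid \emph{optimal displacement} format of Definition~\ref{def:optdis}: the convex-combination-of-Brenier-potentials argument resolves this, but it dictates the choice of an $\alpha\,\mathrm{id}$-type perturbation rather than a generic coercive penalty, and demands that the errors $\delta_i$ depend on $\alpha$ alone (not on $\eps$), so that they can be absorbed after the $\eps\to 0$ step of the Crandall--Ishii estimate.
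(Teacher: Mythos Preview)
Your overall doubling-of-variables strategy is reasonable, but it diverges from the paper's proof and, as written, contains two genuine gaps.

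\textbf{Gap 1: attainment of the maximizer.} You assert that the supremum of $\Psi_{\eps,\alpha}$ is attained because the moment penalty yields tightness and hence narrow compactness. But the hypotheses \eqref{item:comp1}--\eqref{item:comp2} only give $W_2$-continuity of $u_1,u_2$, not narrow upper/lower semicontinuity; moment-bounded sets are narrowly compact but \emph{not} $W_2$-compact, so a maximizing sequence need not have a $W_2$-convergent subsequence, and along a merely narrow limit the value of $\Psi_{\eps,\alpha}$ is not controlled. This is precisely why the paper does \emph{not} try to attain a maximum: it localizes by a hard constraint $W_2(\nu_0,\nu_i)\le R$ (so that all measures stay in a fixed $\mm$-ball where the local Lipschitz bounds apply), shows the resulting functional is bounded below and $W_2$-l.s.c., and then applies \emph{Ekeland's variational principle} to produce an approximate minimizer $z_{\eps\eta\delta}$, the Ekeland error $\delta$ feeding directly into the $\delta$-sub/superdifferentials of Definition~\ref{def:diff}.

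\textbf{Gap 2: the Hamiltonian estimate after the $\alpha$-perturbation.} Assumption~\eqref{eq:assHCP} is stated \emph{only} for the specific displacements $\lambda\,p_{\nu_1,\nu_2}$ and $\lambda\,q_{\nu_1,\nu_2}$ of \eqref{def:pq}. Your test elements carry the extra $\pm 2\alpha\,\mathrm{id}$ correction, and the claim that this is ``absorbable into a slightly enlarged modulus'' presupposes a modulus of continuity of $\H$ in $p$ that is nowhere assumed. (For the concrete $\HF$ this happens to be linear in $p$, but the theorem is stated for general $\H$.) Incidentally, your formula for $T'$ has a sign slip: with $p_\nu^1=(1/\eps)(\mathrm{id}-\bar T)+2\alpha\,\mathrm{id}$ one gets $-p_\nu^1=(1/\eps+2\alpha)\bigl(\tfrac{1/\eps}{1/\eps+2\alpha}\bar T-\mathrm{id}\bigr)$, not the convex combination you wrote; and $\bar T$, being the barycentric projection of an optimal \emph{plan}, is not a Brenier map in general, so the ``gradient of convex function'' justification is not the right one (one should argue via cyclical monotonicity of the pushed-forward plan instead). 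The paper sidesteps all of this by using a cutoff rather than a penalty, so that no extra term contaminates the differentials, and then invokes \cite[Lemma~6]{JMQ20} to extract the sub/superdifferentials in exactly the form required by \eqref{eq:assHCP}.

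In short: replace the moment penalty by a hard $W_2$-ball localization, replace the claimed attainment by Ekeland's principle, and add the $\sigma/s+\sigma/t$ barrier used in the paper to keep the approximate minimizer away from $t=0$; then the sub/superdifferentials come out in the undistorted form needed for \eqref{eq:assHCP}, and the contradiction closes as in \cite[Theorem~2, Claims~2--3]{JMQ20}.
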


\section{Reachability through Lyapunov functions}\label{sec:reach}

\subsection{Lyapunov function and asymptotic reachability}

{\color{black}We introduce Lyapunov functions satisfying the $\mm$-local Lipschitzianity property introduced in Definition \ref{def:locLip}.}
 
In the following, $\F$ is the set-valued map of Section \ref{sec:adm} and $\HF$ is given by \eqref{eq:H}.
\begin{definition}[Lyapunov function]
Given $\varphi:[0,+\infty)\to\R$, we say that a $\mm$-locally Lipschitz map $\V:\prob_2(\R^d)\to[0,+\infty)$ is a \emph{Lyapunov function for $\F$ with rate $\varphi$} if there exists a map $C:\prob_2(\R^d)\to(0,+\infty)$ bounded on bounded sets such that the following \emph{Hamilton-Jacobi inequality} holds
\begin{equation}\label{eq:Linequality}
\varphi(\V(\nu))+\HF(\nu,p)\le C(\nu)\delta,\qquad\forall\,\nu\in\prob_2(\R^d),\,\forall\,p\in\partial_\delta^-\V(\nu),\,\forall\,\delta>0,
\end{equation}
where $\HF$ is the Hamiltonian given in \eqref{eq:H}.
\end{definition}
{\color{black}In the whole paper, we consider the} following function $\varphi:[0,+\infty)\to\R$, that will be associated with the characterization of asymptotic reachability (see next Definition \ref{def:reach+asym}):
\begin{equation}\label{eq:phi}
\varphi(y):=\alpha y,
\end{equation}
where $\alpha>0$ is a given parameter.

\begin{definition}\label{def:reach+asym}
Let $\nu\in\prob_2(\R^d)$, we say that
\begin{enumerate}
\item $\nu$ is \emph{weakly asymptotically reachable} if for any $\mu_0\in\prob_2(\R^d)$ there exists $\mu\in\adm{[0,+\infty)}{\mu_0}$ such that $\mu_t\weakto\nu$ narrowly as $t\to+\infty$;
\item $\nu$ is \emph{strongly asymptotically reachable} if for any $\mu_0\in\prob_2(\R^d)$ there exists $\mu\in\adm{[0,+\infty)}{\mu_0}$ such that $\mu_t\overset{W_2}{\to}\nu$ as $t\to+\infty$, i.e. $W_2(\mu_t,\nu)\to0$.
\end{enumerate}
\end{definition}

Inspired by the classical finite-dimensional setting presented in \cite{Au}, we give sufficient conditions for asymptotic reachability via Lyapunov functions using a viscosity-type approach. A viscosity approach for control problems in Wasserstein spaces is used also in \cite{CMQ} concerning the problem of viability/invariancy.

\begin{lemma}\label{lem:Lvisco}
Let $\V:\prob_2(\R^d)\to[0,+\infty)$ be a $\mm$-locally Lipschitz map and $T>0$. The map $\V$ is a Lyapunov function for $\F$ with rate $\varphi$ {\color{black}as in \eqref{eq:phi}} if and only if the function $\S: [0,T]\times\prob_2(\R^d)\to[0,+\infty)$, defined by $\S(t,\nu):=e^{\alpha\, t}\V(\nu)$, is a \emph{partial-viscosity supersolution} of
\begin{equation}\label{eq:HJB}
\partial_t U(t,\nu)+\HF(\nu,D_\nu U(t,\nu))=0,
\end{equation}
i.e. there exists a map $C_T:\prob_2(\R^d)\to(0,+\infty)$, $C_T(\cdot)$ bounded on bounded sets, such that
\begin{equation}\label{eq:S2-1}
\partial_t\S(t,\nu)+\HF(\nu,\xi)\le C_T(\nu)\,\delta,\quad\forall\, (t,\,\nu)\in(0,T)\times\prob_2(\R^d),\,\forall\,\xi\in\partial_\delta^-\S(t,\nu),\,\forall\,\delta>0.
\end{equation}
\end{lemma}
\begin{proof}

Notice that $\partial_t\S(t,\nu)=\alpha e^{\alpha\, t}\,\V(\nu)$ and, by Definition \ref{def:diffP2}, we have
\begin{equation}\label{eq:partialSetV}
\partial_\delta^-\S(t,\nu)=e^{\alpha\, t}\,\partial_{\delta e^{-\alpha t}}^-\V(\nu),\quad\forall(t,\nu)\in[0,T]\times\prob_2(\R^d).
\end{equation}
Then the relation \eqref{eq:S2-1} is equivalent to the following condition
\begin{equation*}
\alpha e^{\alpha\, t}\,\V(\nu)+e^{\alpha\, t}\HF(\nu,p)\le C_T(\nu)\,\delta,\quad\forall\, (t,\,\nu)\in(0,T)\times\prob_2(\R^d),\,\forall\,p\in\partial_{\delta e^{-\alpha t}}^-\V(\nu),\,\forall\,\delta>0.
\end{equation*}
Dividing by $e^{\alpha\, t}$ and since the condition holds for any $\delta>0$, we get the equivalence with
\begin{equation*}
\alpha\V(\nu)+\HF(\nu,p)\le C_T(\nu)\,\tilde\delta,\quad\forall\, \nu\in\prob_2(\R^d),\,\forall\,p\in\partial_{\tilde\delta}^-\V(\nu),\,\forall\,\tilde\delta>0,
\end{equation*}
i.e. $\V$ is a Lyapunov function for $\F$ with rate $\varphi$.
\end{proof}

We point out that, even if $U$ is partially differentiable w.r.t. $t\in(a,b)$, there is no apparent relation in general between being a viscosity supersolution according with Definition \ref{def:viscosol} and being a partial viscosity supersolution as in Lemma \ref{lem:Lvisco}.
Of course, if we modify the conditions in Definitions \ref{def:viscosol} and Lemma \ref{lem:Lvisco} to hold only for $\delta=0$, then by Remark \ref{rmk:problem1} we have that being a partial supersolution is stronger than being a supersolution of \eqref{eq:HJBintro}.

However, even without this requirement on $\delta$, we can still prove the following result.
\begin{lemma}\label{Q1}
Let $T>0$ and $\V:\prob_2(\R^d)\to[0,+\infty)$ be a Lyapunov function for $\F$ with rate $ \varphi$ {\color{black} as in \eqref{eq:phi}}, namely $\V$ is $\mm$-locally Lipschitz and there exists a map $C:\prob_2(\R^d)\to(0,+\infty)$ bounded on bounded sets such that the following holds
\begin{equation}\label{eq:Linequality'}
\alpha\V(\nu)+\HF(\nu,p)\le C(\nu)\delta,\qquad\forall\,\nu\in\prob_2(\R^d),\,\forall\,p\in\partial_\delta^-\V(\nu),\,\forall\,\delta>0.
\end{equation}

 Then the function $\S: [0,T]\times\prob_2(\R^d)\to[0,+\infty)$, defined by $\S(t,\nu):=e^{\alpha\, t}\V(\nu)$, is a viscosity supersolution of
\begin{equation}\label{eq:HJB'}
\partial_t U(t,\nu)+\HF(\nu,D_\nu U(t,\nu))=0,
\end{equation}
in the sense of Definition \ref{def:viscosol}.
\end{lemma}
\begin{proof} Let us fix $ \delta >0$, $(t, \nu)\in[0,T]\times\prob_2(\R^d)$ and  $( p_t, p_ \nu) \in \boldsymbol{D}^-_\delta \S(t,\nu)$; we will prove that 
\begin{equation}\label{a}
p_t + \HF ( \nu , p _ \nu )  \leq (  C(\nu) e ^{\alpha T } +1) \delta,
\end{equation}
which implies that $\S$  is a viscosity supersolution  to (\ref{eq:HJB'}). 
By Proposition \ref{prop:incldiff2.0} and recalling the equivalence \eqref{eq:partialSetV}, there exist $a_t \in [-1,1] $ and  $ \bar p _ \nu \in   \partial_{ e ^{- \alpha t } \delta} ^-\V(\nu) \subset  \partial_{   \delta }^-\V(\nu) $  such that
\begin{equation*}\left\{ \begin{array}{l}
p_t = \alpha e ^{\alpha t} V (\nu) + \delta a_t \\
p _\nu = e ^{\alpha t} \bar p _ \nu.
\end{array}\right.
\end {equation*}
So that
 \begin{align*}
p_t + \HF ( \nu , p _ \nu ) &=
 \delta a_t  + \alpha e ^{\alpha t} V (\nu)  +  \HF ( \nu , e ^{\alpha t} \bar p _ \nu ) \\
&\leq \delta +  e ^{\alpha t} C(\nu) \delta \leq  (  C(\nu) e ^{\alpha T } +1) \delta,
\end{align*}
since \eqref{eq:Linequality'} holds for $\bar p _ \nu $. Hence we get (\ref{a}) and the proof is complete.
\end{proof}
\begin{proposition}\label{prop:decreas}
Assume Hypothesis \ref{hypo}. Let $\V:\prob_2(\R^d)\to[0,+\infty)$ be  $\mm$-locally Lipschitz. The map $\V$ is a Lyapunov function for $\F$ with rate $\varphi$ {\color{black}as in \eqref{eq:phi}} if and only if for any $\mu_0\in\prob_2(\R^d)$ there exists $\mu\in\adm{[0,+\infty)}{\mu_0}$ such that the function $\SS:[0,+\infty)\to[0,+\infty)$, defined by $\SS(t):=e^{\alpha\,t}\V(\mu_t)$, is nonincreasing.
\end{proposition}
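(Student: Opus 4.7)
Fix $\nu\in\prob_2(\R^d)$, $\delta>0$, and $p\in\partial_\delta^-\V(\nu)$. Applying the hypothesis with $\mu_0=\nu$ produces $\mu\in\adm{[0,+\infty)}{\nu}$ along which $\SS$ is nonincreasing, with some Borel velocity field $v$. The monotonicity yields $\V(\mu_t)-\V(\nu)\le -\alpha t\,\V(\nu)+o(t)$ as $t\to 0^+$. On the other hand, admissibility gives $W_2(\nu,\mu_t)=O(t)$ by \eqref{eq:growthF} and the moment estimates of Appendix \ref{app0}, and the standard first-order Wasserstein expansion for $\ssigma_t\in\Gamma_o(\nu,\mu_t)$ yields $\int\langle p(x),y-x\rangle\,\d\ssigma_t(x,y)=t\int\langle p(x),v_0(x)\rangle\,\d\nu(x)+o(t)$. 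Inserting both estimates into the subdifferential inequality of Definition \ref{def:diffP2} (with $\Delta_{\ssigma_t}=O(t)$), then dividing by $t$ and letting $t\to 0^+$, produces $-\alpha\V(\nu)\ge\int\langle p,v_0\rangle\,\d\nu-C(\nu)\,\delta$; since $v_0(x)\in\F(x,\nu)$ for $\nu$-a.e.\ $x$ gives $\int\langle p,v_0\rangle\,\d\nu\ge\HF(\nu,p)$, the Lyapunov inequality \eqref{eq:Linequality'} follows.

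\textbf{Sufficiency ($\Rightarrow$).} By Lemma \ref{Q1}, $\S(t,\nu):=e^{\alpha t}\V(\nu)$ is a viscosity supersolution of \eqref{eq:HJB'} on every $[0,T]\times\prob_2(\R^d)$. Given $T>0$ and $n\in\N$, I would construct $\mu^n\in\adm{[0,T]}{\mu_0}$ piecewise on the uniform partition $t_k:=kT/n$, $0\le k\le n$: start with $\mu^n_0:=\mu_0$ and recursively take $\mu^n|_{[t_k,t_{k+1}]}$ to be an optimal trajectory (existence from \cite[Corollary~2]{JMQ20}) for the Mayer problem with initial datum $\mu^n_{t_k}$ at time $t_k$ and terminal cost $\S(t_{k+1},\cdot)$. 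Denoting by $W^{(k)}$ its value function, $W^{(k)}$ is a viscosity solution of \eqref{eq:HJB'} on $[t_k,t_{k+1}]$ with $W^{(k)}(t_{k+1},\cdot)=\S(t_{k+1},\cdot)$; the Comparison Theorem \ref{thm:comparison} applied to the supersolution $\S$ and the subsolution $W^{(k)}$ with matching terminal data forces $W^{(k)}\le\S$ on $[t_k,t_{k+1}]\times\prob_2(\R^d)$. Combined with optimality of the piece $\mu^n|_{[t_k,t_{k+1}]}$, this yields the key grid-point inequality
\[
e^{\alpha t_{k+1}}\V(\mu^n_{t_{k+1}})=W^{(k)}(t_k,\mu^n_{t_k})\le\S(t_k,\mu^n_{t_k})=e^{\alpha t_k}\V(\mu^n_{t_k}),
\]
so that the sequence $k\mapsto e^{\alpha t_k}\V(\mu^n_{t_k})$ is nonincreasing.

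\textbf{Limit and diagonalization.} The family $\{\mu^n\}\subset\adm{[0,T]}{\mu_0}$ is equicontinuous with uniformly bounded $2$-moments (by \eqref{eq:growthF} and Appendix \ref{app0}); Arzel\`a--Ascoli together with closedness of $\adm{[0,T]}{\mu_0}$ under uniform $W_2$-convergence (standard under Hypothesis \ref{hypo}) delivers a subsequential limit $\mu^{(T)}\in\adm{[0,T]}{\mu_0}$. The $\mm$-local Lipschitzianity of $\V$ together with the uniform moment control makes $t\mapsto e^{\alpha t}\V(\mu^n_t)$ converge uniformly on $[0,T]$ to the continuous function $t\mapsto e^{\alpha t}\V(\mu^{(T)}_t)$, and since the partitions become dense as $n\to\infty$, the monotonicity at grid points transfers to the continuous limit on the whole of $[0,T]$. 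A diagonal extraction across $T=1,2,3,\ldots$ then produces the required $\mu\in\adm{[0,+\infty)}{\mu_0}$ with $\SS$ nonincreasing. The main obstacle is precisely in this forward direction: a single optimal trajectory for the Mayer problem on all of $[0,T]$ with terminal cost $\S(T,\cdot)$ would supply only the endpoint bound $\S(T,\mu^*_T)\le\S(t,\mu^*_t)$ (DPP plus comparison relate the constant value function $U_T$ to $\S$ only at $t=T$), and not monotonicity between two arbitrary intermediate times; the refinement-and-compactness scheme above is what converts many short-interval endpoint bounds into a single trajectory with full monotonicity in the limit.
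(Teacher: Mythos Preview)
Your forward direction ($\Rightarrow$) is essentially the paper's argument. The paper applies comparison once on $[0,T]$ between the value function $U_g$ (with terminal cost $g=e^{\alpha T}\V$) and the supersolution $\S$, obtaining only the endpoint bound $e^{\alpha T}\V(\mu_T)\le e^{\alpha s}\V(\mu_s)$ along a single optimal trajectory; the passage to full monotonicity is deferred to the subdivision-plus-compactness argument in the proof of Corollary~\ref{cor:Lviab}. You correctly identify this obstacle and fold the refinement scheme directly into the proof. Either way the mechanism is the same: short-interval comparison bounds, concatenation, and compactness of $\adm{[0,T]}{\mu_0}$.

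Your backward direction ($\Leftarrow$) has a genuine gap. The claim that for an \emph{optimal} plan $\ssigma_t\in\Gamma_o(\nu,\mu_t)$ one has
\[
\int\langle p(x),y-x\rangle\,\d\ssigma_t(x,y)=t\int\langle p(x),v_0(x)\rangle\,\d\nu(x)+o(t)
\]
is not a ``standard expansion'' and is in general false: optimal plans are determined by the two marginals alone and carry no information about the driving field $v$, so two different admissible velocity fields producing the same $\mu_t$ would force the same left-hand side but different right-hand sides. In addition, $v_0$ need not be well defined (the selection $v_t$ is only specified for a.e.\ $t$, and particles may split from a single point). The paper avoids both problems by exploiting that the subdifferential inequality in Definition~\ref{def:diffP2} holds for \emph{every} coupling, and choosing the dynamics-based plan $\tilde\ppi_t:=(\ev_{\bar t},\ev_t)_\sharp\eeta$ coming from the superposition principle. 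For this plan $y-x=\int_{\bar t}^t\dot\gamma(s)\,\d s$ identically, the Lipschitz property of $\F$ gives $\langle p(x),\dot\gamma(s)\rangle\ge\inf_{v\in\F(x,\bar\nu)}\langle p(x),v\rangle-\delta_s|p(x)|$ with $\delta_s\to 0$, and Fatou's lemma yields directly $\liminf_{t\to\bar t^+}\frac{1}{t-\bar t}\int\langle p,y-x\rangle\,\d\tilde\ppi_t\ge\HF(\bar\nu,p)$, bypassing $v_0$ altogether. Replace your optimal plan and the invocation of $v_0$ by this argument and your sketch becomes correct.
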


\begin{proof}
First, assume that $\V$ is a Lyapunov function for $\F$ with rate $\varphi$. Fix $\mu_0\in\prob_2(\R^d)$, $T>0$ and consider the functional $g:\prob_2(\R^d)\to\R$ defined by
\[g(\nu):=e^{\alpha\, T}\V(\nu),\quad \text{for any }\nu\in\prob_2(\R^d).\]
Let $U_g:[0,T]\times\prob_2(\R^d)\to\R$, defined by
\begin{equation}\label{eq:Wcost}
U_g(t,\nu):=\inf\left\{g(\mu_{T})\,:\,\mu\in\adm{[t,T]}{\nu}\right\}.
\end{equation}
Since $g$ is $\mm$-locally Lipschitz, by Lemma \ref{lem:LipUg} the function $U_g$ is continuous in $[0,T]$, $\mm$-locally Lipschitz in $\prob_2(\R^d)$ and it is a viscosity solution of
\begin{equation}\label{syst:HJB+FC}
\begin{cases}
\partial_t U(t,\nu)+\HF(\nu,D_\nu U(t,\nu))=0,\\
U(T,\nu)=g(\nu),
\end{cases}
\end{equation}
according with Definition \ref{def:viscosol}.
Moreover, by Lemma \ref{Q1}, the function $\S$ is a viscosity supersolution of \eqref{syst:HJB+FC}.
Recalling Lemma \ref{lem:Hok},  by the  Comparison Principle  (Theorem \ref{thm:comparison}) we get 
\[U_g(t,\nu)\le \S(t,\nu),\quad\forall\,(t,\nu)\in[0,T]\times\prob_2(\R^d).\]
As recalled in Section \ref{sec:adm}, by the Dynamic Programming Principle (cf. \cite[Corollary 2 and Proposition 3]{JMQ20}), there exists $\mu\in\adm{[0,T]}{\mu_0}$ such that $U_g(t,\mu_t)= U_g(s,\mu_s)$ for all $0\le s\le t\le T$. Thus, we have
\begin{equation}\label{eq:DPP+compfor2}
e^{\alpha\,T}\V(\mu_{T})=U_g(T,\mu_{T})= U_g(s,\mu_s)\le \S(s,\mu_s)= e^{\alpha\,s}\V(\mu_s),\quad \forall\, s\in[0,T].
\end{equation}
By the arbitrariness of $T>0$ and since the gluing of admissible trajectories is still an admissible trajectory, we conclude that there exists $\mu\in\adm{[0,+\infty)}{\mu_0}$ such that the map $\SS$, is nonincreasing in $[0,+\infty)$. This proves the left-to-right implication in Proposition \ref{prop:decreas}.

We now prove the reverse implication. Let $T>0$ be arbitrarily fixed.
In view of the equivalence result of Lemma \ref{lem:Lvisco}, in order to prove that $\V$ is a Lyapunov function for $\F$ with rate $\varphi$, we proceed as in the proof of \cite[Theorem 3, Claim 2]{JMQ20}. 
Fix $(\bar{t},\bar{\nu})\in(0,T)\times\prob_2(\R^d)$, $\delta>0$, and take  $p_{\bar{\nu}}\in\partial_\delta^-\V( \bar \nu) $. By assumption, up to a time variable change, there exists $\mu\in\adm{[\bar{t},+\infty)}{\bar{\nu}}$ such that $\S(t,\mu_t)\le\S(s,\mu_s)$ for all $\bar{t}\le s \le t\le T$.

By Definition \ref{def:diffP2}, for all $\ppi_t\in\Gamma(\bar{\nu},\mu_t)$ we have
\begin{align*}
0&\ge \S(t,\mu_t)-\S(\bar{t},\bar{\nu}) = e^{\alpha t } \V (\mu _t) -  e^{\alpha \bar t } \V (\bar{\nu} ) \\ 
& = (e^{\alpha t }  -  e^{\alpha \bar t })  \V (\mu _t) +  e^{\alpha \bar t }( \V (\mu _t) -
 \V (\bar{\nu} ) ) \\
&\ge  (e^{\alpha t }  -  e^{\alpha \bar t })  \V (\mu _t) +e^{\alpha\bar{t}}\left(\int\langle p_{\bar{\nu}}(x),y-x\rangle \d\ppi_t(x,y)-\delta\cdot\Delta_{\ppi_t}+o(\Delta_{\ppi_t})\right)
\end{align*}
where $\Delta_{\ppi_t}:=\sqrt{\int|x-y|^2\d\ppi_t}$.
We divide by $ |t - \bar t| $ and pass to the $\liminf_{t\to\bar{t}^+}$; using the continuity of $s\mapsto\mu_s$ and of $V$, we get 
\begin{equation*}
0\ge \alpha e^{\alpha\bar{t}}V(\bar{\nu})+e^{\alpha\bar{t}}\liminf_{t\to\bar{t}^+}\frac{1}{t-\bar{t}}\left(\int\langle p_{\bar{\nu}}(x),y-x\rangle \d\ppi_t(x,y)-\delta\cdot\Delta_{\ppi_t}+o(\Delta_{\ppi_t})\right),
\end{equation*}
hence
\begin{equation}\label{eq:quasiend}
{\color{black}\limsup_{t\to\bar{t}^+}}\frac{\Delta_{\ppi_t}}{t-\bar{t}}\left(\delta-\frac{o(\Delta_{\ppi_t})}{\Delta_{\ppi_t}}\right)\ge\alpha V(\bar{\nu})+\liminf_{t\to\bar{t}^+}\frac{1}{t-\bar{t}}\int\langle p_{\bar{\nu}}(x),y-x\rangle \d\ppi_t(x,y).
\end{equation}

Let $\ev_t:\R^d\times\rmC([\bar{t},T])\to\R^d$ be the evaluation operator at time $t\in[\bar{t},T]$, defined by $\ev_t(x,\gamma):=\gamma(t)$. Let $\eeta\in\prob(\R^d\times\rmC([\bar{t},T]))$ be such that $\mu_t={\ev_t}_\sharp\eeta$ and such that for $\eeta$-a.e. $(x,\gamma)\in\R^d\times\rmC([\bar{t},T])$ we have $\gamma(\bar{t})=x$ and $\gamma$ is an absolutely continuous solution of $\dot\gamma(t)=v_t(\gamma(t))$, where $v_t$ is an admissible Borel vector field driving $\mu$, i.e. $v_t\in L^2_{\mu_t}(\R^d;\R^d)$,
 $v_t(x)\in\F(x,\mu_t)$ and $\partial_t\mu_t+\div(v_t\,\mu_t)=0$ for $t>\bar{t}$. We recall that such a measure $\eeta$ exists in view of the Superposition Principle \cite[Theorem 8.2.1]{ags} and thanks to the growth property of $\F$ in \eqref{eq:growthF}. Define the plan $\tilde{\ppi}_t:=(\ev_{\bar{t}},\ev_t)_\sharp\eeta\in\Gamma(\bar{\nu},\mu_t)$, then by Jensen's inequality we have
\begin{align*}
\frac{1}{(t-\bar{t})^2}\int_{\R^d\times\R^d}|x-y|^2\d\tilde{\ppi}_t(x,y)&=\int_{\R^d\times\rmC([\bar{t},T])}\left|\frac{\gamma(t)-\gamma(\bar{t})}{t-\bar{t}}\right|^2\d\eeta(x,\gamma)\\
&=\int_{\R^d\times\rmC([\bar{t},T])}\left|\frac{1}{t-\bar{t}}\int_{\bar{t}}^t\dot\gamma(s)\d s\right|^2\d\eeta(x,\gamma)\\
&\le\int_{\R^d\times\rmC([\bar{t},T])}\frac{1}{t-\bar{t}}\int_{\bar{t}}^t|\dot\gamma(s)|^2\d s\d\eeta(x,\gamma)\\
&\le\int_{\R^d\times\rmC([\bar{t},T])}\|\dot\gamma\|^2_{L^\infty([\bar{t},T])}\d\eeta\le D(\bar{\nu}),
\end{align*}
where $D(\bar{\nu})$ comes from Lemma \ref{lem:attainCM}. We then get
\begin{equation}\label{eq:estimDeltat}
\frac{\Delta_{\tilde{\ppi}_t}}{t-\bar{t}}=\sqrt{\frac{1}{(t-\bar{t})^2}\int |x-y|^2\d\tilde{\ppi}_t}\le\sqrt{D(\bar{\nu})}=:C(\bar{\nu}),
\end{equation}
where $C:\prob_2(\R^d)\to(0,+\infty)$ is bounded on bounded sets.

We now analyze the right-hand side in \eqref{eq:quasiend} with the choice of $\tilde{\ppi}_t$. We have
\begin{align*}
\frac{1}{t-\bar{t}}\int_{\R^d\times\R^d}\langle p_{\bar{\nu}}(x),y-x\rangle \d\tilde{\ppi}_t(x,y)&=\int_{\R^d\times\rmC([\bar{t},T])}\langle p_{\bar{\nu}}(x),\frac{\gamma(t)-\gamma(\bar{t})}{t-\bar{t}}\rangle\d\eeta(x,\gamma)\\
&=\int_{\R^d\times\rmC([\bar{t},T])}\langle p_{\bar{\nu}}(x),\frac{1}{t-\bar{t}}\int_{\bar{t}}^t\dot\gamma(s)\d s\rangle\d\eeta(x,\gamma)\\
&\ge\int_{\R^d\times\rmC([\bar{t},T])}\frac{1}{t-\bar{t}}\int_{\bar{t}}^t\inf_{v\in\F(\gamma(s),\mu_s)}\langle p_{\bar{\nu}}(x),v\rangle\d s\d\eeta(x,\gamma)\\
&\ge \int_{\R^d\times\rmC([\bar{t},T])}\frac{1}{t-\bar{t}}\int_{\bar{t}}^t\left(\inf_{v\in\F(x,\bar{\nu})}\langle p_{\bar{\nu}}(x),v\rangle-\delta_s|p_{\bar{\nu}}(x)|\right)\d s\d\eeta(x,\gamma),
\end{align*}
where $\delta_s=L\,(W_2(\mu_s,\bar{\nu})+|\gamma(s)-x|)$. By taking the $\liminf_{t\to\bar{t}^+}$ and applying Fatou's Lemma (recall the estimate in Lemma \ref{lem:attainCM}, that $p_{\bar{\nu}}\in L^2_{\bar{\nu}}$ and the growth property of $\F$ in \eqref{eq:growthF}), we have
\begin{align*}
&\liminf_{t\to\bar{t}^+}\frac{1}{t-\bar{t}}\int_{\R^d\times\R^d}\langle p_{\bar{\nu}}(x),y-x\rangle \d\tilde{\ppi}_t(x,y)\ge\\
&\ge \int_{\R^d\times\rmC([\bar{t},T])}\liminf_{t\to\bar{t}^+}\frac{1}{t-\bar{t}}\int_{\bar{t}}^t\left(\inf_{v\in\F(x,\bar{\nu})}\langle p_{\bar{\nu}}(x),v\rangle-\delta_s|p_{\bar{\nu}}(x)|\right)\d s\d\eeta(x,\gamma)\\
&=\int_{\R^d}\inf_{v\in\F(x,\bar{\nu})}\langle p_{\bar{\nu}}(x),v\rangle\d\bar{\nu}(x),
\end{align*}
where the last equality follows by the continuity of $s\mapsto\delta_s$. Thus, by \eqref{eq:quasiend} applied to the plan $\tilde{\ppi}_t$, using \eqref{eq:estimDeltat} and the estimate above, we get
\begin{equation}\label{eq:almostfinal}
C(\bar{\nu})\delta\ge\alpha \V(\bar{\nu})+\int_{\R^d}\inf_{v\in\F(x,\bar{\nu})}\langle p_{\bar{\nu}}(x),v\rangle\d\bar{\nu}(x)=\varphi(\V(\bar{\nu}))+\HF(\bar{\nu},p_{\bar{\nu}}).
\end{equation}
We conclude that $\V$ is a Lyapunov function for $\F$ with rate $\varphi$.
\end{proof}

Following \cite{Au,AuC,AuF}, we can give another characterization for Lyapunov functions in terms of viability conditions over their epigraph, as follows.

We denote by $\epi{U}$ the \emph{epigraph} of a function $U:\prob_2(\R^d)\to\R$, i.e. the set
\[\epi{U}:=\left\{(\nu,r)\in\prob_2(\R^d)\times\R\,:\,r\ge U(\nu)\right\}.\]
\begin{corollary}\label{cor:Lviab}
Assume Hypothesis \ref{hypo}. Let $\V:\prob_2(\R^d)\to[0,+\infty)$ be a $\mm$-locally Lipschitz map. Then $\V$ is a Lyapunov function for $\F$ with rate $\varphi$ {\color{black}as in \eqref{eq:phi}} if and only if $\epi{\V}$ is viable for the coupled dynamics
\begin{equation}\label{eq:CE+DE}
\begin{cases}
\partial_t\mu_t+\div(v_t\,\mu_t)=0, &t>0,\quad\text{with }
v_t(x)\in\F(x,\mu_t),\\
y'(t)=-\varphi(y(t)), &t>0.
\end{cases}
\end{equation}
\end{corollary}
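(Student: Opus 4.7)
The plan is to translate the viability property into a condition on admissible trajectories and reduce the statement to Proposition \ref{prop:decreas}.

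The scalar ODE $y'(t)=-\varphi(y(t))=-\alpha y(t)$ has unique solutions $y(t)=y(0)\,e^{-\alpha t}$, so a pair $(\mu,y)$ solves the coupled system \eqref{eq:CE+DE} starting from $(\mu_0,r_0)$ if and only if $\mu\in\adm{[0,+\infty)}{\mu_0}$ and $y(t)=r_0 e^{-\alpha t}$. Hence $\epi{\V}$ is viable for \eqref{eq:CE+DE} if and only if for every $\mu_0\in\prob_2(\R^d)$ and every $r_0\ge\V(\mu_0)$ there exists $\mu\in\adm{[0,+\infty)}{\mu_0}$ such that $\V(\mu_t)\le r_0 e^{-\alpha t}$ for all $t\ge 0$. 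Since the tightest choice is $r_0=\V(\mu_0)$, I would first reduce viability to the equivalent condition
\[
(V')\qquad\forall\,\mu_0\in\prob_2(\R^d),\ \exists\,\mu\in\adm{[0,+\infty)}{\mu_0}:\ \V(\mu_t)\le\V(\mu_0)\,e^{-\alpha t}\ \text{ for all }t\ge 0.
\]

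For the forward direction (Lyapunov $\Rightarrow$ viable) I would simply invoke Proposition \ref{prop:decreas}: it produces, for each $\mu_0$, an admissible $\mu$ along which $t\mapsto e^{\alpha t}\V(\mu_t)$ is nonincreasing, which is strictly stronger than $(V')$, so $\epi{\V}$ is viable.

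For the reverse direction (viable $\Rightarrow$ Lyapunov) I would rerun the reverse implication of Proposition \ref{prop:decreas}, observing that its argument uses only the local majorization $\V(\mu_t)\le\V(\mu_{\bar t})e^{-\alpha(t-\bar t)}$ near the starting time. Concretely, fix $\bar\nu\in\prob_2(\R^d)$, $\delta>0$, $p\in\partial_\delta^-\V(\bar\nu)$, apply $(V')$ with $\mu_0=\bar\nu$ to select $\mu$ with $\V(\mu_t)\le\V(\bar\nu)\,e^{-\alpha t}$, and then combine this bound with the $\delta$-subdifferential inequality of Definition \ref{def:diffP2} tested on the plan $\tilde\ppi_t=(\ev_0,\ev_t)_\sharp\eeta$ furnished by the Superposition Principle. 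Dividing by $t>0$ and passing to $\liminf_{t\to 0^+}$ exactly as in the last part of the proof of Proposition \ref{prop:decreas} (using the same a priori bounds \eqref{eq:estimDeltat} and the Lipschitz/growth properties of $\F$) yields $\alpha\V(\bar\nu)+\HF(\bar\nu,p)\le C(\bar\nu)\,\delta$, which is \eqref{eq:Linequality'}.

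The main subtlety is exactly the reverse implication: condition $(V')$ is a priori strictly weaker than the nonincreasing statement in Proposition \ref{prop:decreas}, so one has to check carefully that the $\liminf_{t\to\bar t^+}$ computation in that proof really exploits only the bound at the initial time and not the global monotonicity along the whole trajectory. Once this observation is in place, the corollary follows directly from Proposition \ref{prop:decreas} and the translation of viability into $(V')$.
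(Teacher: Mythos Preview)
Your proposal is correct, and the reverse implication is handled differently from the paper.

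The paper's proof reduces everything to Proposition~\ref{prop:decreas} used as a black box: it shows that viability of $\epi{\V}$ is equivalent to your condition $(V')$ (their $(\mathrm V2)$), and then spends most of the argument proving that $(V')$ is in fact equivalent to the \emph{nonincreasing} property $(\mathrm V3)$ of $t\mapsto e^{\alpha t}\V(\mu_t)$. The nontrivial direction $(V')\Rightarrow(\mathrm V3)$ is obtained by a subdivision/gluing construction on $[0,T]$ together with the relative compactness of $\adm{[0,T]}{\mu_0}$ (cf.\ \cite[Corollary~1]{JMQ20}), and then the equivalence with the Lyapunov property follows directly from Proposition~\ref{prop:decreas}.

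You instead bypass $(\mathrm V3)$ entirely: for the reverse implication you go back into the \emph{proof} of Proposition~\ref{prop:decreas} and observe, correctly, that its second half uses the monotonicity only at the initial time $s=\bar t$, i.e.\ exactly the inequality $\V(\mu_t)\le\V(\bar\nu)\,e^{-\alpha(t-\bar t)}$ that $(V')$ supplies after a time shift. This makes your route more elementary, since it avoids the subdivision and the compactness argument. The price is that you do not get the additional equivalence $(V')\Leftrightarrow(\mathrm V3)$, which the paper obtains as a byproduct and which may be of independent interest; but for the corollary as stated, your argument is complete.
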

\begin{proof}
Recall that, by definition, $\epi{\V}$ is viable for the coupled dynamics \eqref{eq:CE+DE} if and only if the following condition holds
\begin{equation*} \begin{array}{ll} 
(\mathrm{V1}) &
\begin{array}{l} 
\mbox{$\forall\,(\mu_0,y_0)\in\epi{\V}$ there exists a solution $(\mu,y)$ of \eqref{eq:CE+DE} with initial condition $(\mu_0,y_0)$,} \\ \mbox{i.e. $\mu\in\adm{[0,+\infty)}{\mu_0}$ and $y(t;y_0)=e^{-\alpha\,t}y_0$, such that $\left(\mu_t,y(t;y_0)\right)\in\epi{\V}$ for all $t\ge0$,}\\ \mbox{i.e. $\V(\mu_t)\le e^{-\alpha\,t}y_0$ for all $t\ge 0$.} \end{array}  \end{array}
\end{equation*}
Notice that condition $(\mathrm{V1})$  is equivalent to the following one, where the initial condition is taken only at the boundary of $\epi{\V}$:
\begin{equation*} \begin{array}{ll} 
(\mathrm{V2}) &
\begin{array}{l} 
\mbox{$\forall\,\mu_0\in\prob_2(\R^d)$ there exists $\mu\in\adm{[0,+\infty)}{\mu_0}$ such that} \\ 
\mbox{$\left(\mu_t,y(t;\V(\mu_0)\right)=\left(\mu_t,e^{-\alpha\,t}\V(\mu_0)\right)\in\epi{\V}$ for all $t\ge0$,}\\ \mbox{i.e. $e^{\alpha\,t}\V(\mu_t)\le\V(\mu_0)$ for all $t\ge 0$. }
 \end{array}  \end{array}
\end{equation*}
Thanks to Proposition \ref{prop:decreas}, to conclude we only need to prove  that $(\mathrm{V2})$  is equivalent to 
\begin{equation}\label{V3} \begin{array}{l} 
\mbox{$\forall\,\mu_0\in\prob_2(\R^d)$ there exists $\mu\in\adm{[0,+\infty)}{\mu_0}$ such that} \\ 
\mbox{$\SS(t):=e^{\alpha\,t}\V(\mu_t)$, is nonincreasing. }\end{array}
\end{equation} 
Clearly (\ref{V3}) implies $(\mathrm{V2})$.
 Conversely, let us assume that $(\mathrm{V2})$ holds true. Fix $\mu_0\in\prob_2(\R^d)$ and $T >0$. Consider for any integer $n>0$ the subdivision of $[0,T]$ given by $ k \frac{T}{n} $, for $k=0,1, \ldots, n$.
By $(\mathrm{V2})$ there exists $\mu ^n\in\adm{[0, \frac{T}{n} ] }{\mu_0}$ such that $$ e ^{\alpha t } V (\mu ^n _t) \leq V (\mu_0), \; \forall t \in \left[0,  \frac{T}{n}\right].$$ 
Applying again $(\mathrm{V2})$  with the initial condition $ \mu^n_{\frac{T}{n}}$, we can extend $ \mu ^n$ on $\left[\frac{T}{n}, 2 \frac{T}{n}\right]$ such that 
$$ e ^{\alpha( t -\frac{T}{n})  } V (\mu ^n _t) \leq V (\mu_{\frac{T}{n}} ^n ), \; \forall t \in \left[\frac{T}{n}, 2 \frac{T}{n}\right].$$ By a direct iteration, we have obtained $\mu ^n\in\adm{[0, T ] }{\mu_0}$  such that 
\begin{equation} \label{V4} e ^{\alpha( t -k\frac{T}{n})  } V (\mu ^n _t) \leq V (\mu_{k\frac{T}{n}} ^n ), \; \forall t \in \left[k\frac{T}{n}, (k+1) \frac{T}{n}\right], \; \forall k = 0,1, \ldots (n-1).\end{equation}
By relative compactness of $\adm{[0, T ] }{\mu_0}$ (cf. \cite[Corollary 1]{JMQ20}), we have that $\{\mu ^n\}_n\subset\adm{[0, T ] }{\mu_0}$ converges to some $ \mu\in \adm{[0, T ] }{\mu_0}$ uniformly on $ [0,T]$, up to subsequences.

Fix $0 < \sigma < \tau <T$, we claim that $ e ^{\alpha \sigma} V (\mu _ \sigma) \ge e ^{ \alpha \tau } V ( \mu _ \tau) $. Indeed, for $n $ large enough, we can find integers $ k^n _\sigma < k ^n _\tau \leq T$ such that
$$ 0 < k ^n  _ \sigma \frac{T}{n} \leq \sigma <(1+ k ^n  _ \sigma )\frac{T}{n} < k ^n  _ \tau \frac{T}{n} \leq  \tau <(1+ k ^n  _ \tau )  \frac{T}{n} < T.$$
Denoting $ \sigma _n : =  k ^n  _ \sigma \frac{T}{n}$ and 
$\tau _n :=  k ^n  _ \tau \frac{T}{n}$, a straightforward use of (\ref{V4}) gives 
\[ e ^{ \alpha (\tau _n  - \sigma _n) } V ( \mu ^n _{\tau _n}) \leq V (\mu ^n _{\sigma _n}).\]
Passing to the limit as $ n \to +\infty $, the above inequality gives our claim  $ e ^{\alpha \sigma} V (\mu _ \sigma) \ge e ^{ \alpha \tau } V ( \mu _ \tau) $.
Thus $t \mapsto e^{\alpha\,t}\V(\mu_t)$, is nonincreasing on $[0,T]$. Applying the same argument on the intervals $[iT,(i+1)T]$, for all $i\in\N$, we obtain \eqref{V3}.
\end{proof}

In the following, we use Proposition \ref{prop:decreas} to obtain the main result of the paper: a sufficient condition for asymptotic reachability via Lyapunov functions.
\begin{theorem}[Asymptotic reachability]\label{thm:main}
Assume Hypothesis \ref{hypo}. Let $\bar\nu\in\prob_2(\R^d)$ and assume there exists a Lyapunov function $\V$ for $\F$ with rate $\varphi$ {\color{black}as in \eqref{eq:phi}} such that $\V(\bar\nu)=0$, $\V(\nu)>0$ for any $\nu\neq\bar\nu$ and satisfying either one of the following conditions: 
\begin{enumerate}
\item\label{item:1} $\V$ is narrowly lower semicontinuous and if $\{\nu_n\}_{n\in\N}\subset\prob_2(\R^d)$ s.t. $\m{\nu_n}\to+\infty$, then $\V(\nu_n)\to+\infty$, as $n\to+\infty$;
\item\label{item:2} $\V$ has compact sublevels in $W_2$-topology;
\item\label{item:3} there exists $\eps>0$ such that, for any $\{\nu_n\}_{n\in\N}\subset\prob_2(\R^d)$ with $\meps{\nu_n}\to+\infty$, we have $\V(\nu_n)\to+\infty$, as $n\to+\infty$.
\end{enumerate}
If \eqref{item:1} holds, then $\bar\nu$ is weakly asymptotically reachable; while
if either \eqref{item:2} or \eqref{item:3} holds, then $\bar\nu$ is strongly asymptotically reachable.
\end{theorem}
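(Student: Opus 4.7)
The starting point is Proposition \ref{prop:decreas}: for any initial datum $\mu_0 \in \prob_2(\R^d)$ it furnishes an admissible trajectory $\mu \in \adm{[0,+\infty)}{\mu_0}$ along which $t \mapsto e^{\alpha t}\V(\mu_t)$ is nonincreasing, so that $\V(\mu_t) \le e^{-\alpha t}\V(\mu_0) \to 0$ as $t \to +\infty$. The task is then, in each of the three cases, to upgrade this energy decay to topological convergence of $\mu_t$ to the unique zero $\bar\nu$ of $\V$. The plan is the standard compactness $+$ semicontinuity $+$ uniqueness-of-zero argument: first establish precompactness of $\{\mu_t\}_{t\ge 0}$ in the appropriate topology; then extract a convergent subsequence along any $t_n \to +\infty$; finally use (lower semi)continuity of $\V$ together with $\{\V = 0\} = \{\bar\nu\}$ to identify every subsequential limit with $\bar\nu$, whence full convergence.

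For case \eqref{item:1}, I would combine the uniform bound $\V(\mu_t) \le \V(\mu_0)$ with the contrapositive of the $\mm$-coercivity assumption to obtain $\sup_t \m{\mu_t} < +\infty$; by Markov's inequality this yields tightness of $\{\mu_t\}_{t \ge 0}$, hence via Prokhorov any sequence $\mu_{t_n}$ admits a narrowly convergent subsequence to some $\mu_\infty$. Narrow lower semicontinuity of $\V$ then gives $\V(\mu_\infty) \le \liminf_n \V(\mu_{t_n}) = 0$, forcing $\mu_\infty = \bar\nu$; since every narrow cluster point coincides with $\bar\nu$, the whole trajectory converges narrowly to $\bar\nu$.

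For case \eqref{item:2}, the sublevel set $\{\V \le \V(\mu_0)\}$ is $W_2$-compact and contains every $\mu_t$, so $\{\mu_t\}$ is $W_2$-precompact; any $W_2$-cluster point has zero Lyapunov value by $W_2$-continuity of $\V$ (ensured by $\mm$-local Lipschitzianity), hence equals $\bar\nu$. For case \eqref{item:3}, the contrapositive of the $(2+\eps)$-coercivity assumption gives $\sup_t \meps{\mu_t} < +\infty$, which simultaneously yields tightness and uniform integrability of $x \mapsto |x|^2$ along $\{\mu_t\}$ via the elementary estimate
\[
\int_{\{|x|>R\}}|x|^2\,\d\mu_t \le R^{-\eps}\,\meps{\mu_t}^{2+\eps};
\]
by \cite[Proposition 7.1.5]{ags} these two properties are jointly equivalent to $W_2$-precompactness, and the same continuity argument as in case \eqref{item:2} identifies any cluster point as $\bar\nu$.

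The argument is largely a packaging of standard compactness and semicontinuity tools, so I do not expect a serious obstacle; the only delicate point is to match the coercivity hypothesis in each case with the exact topology it is strong enough to control. Case \eqref{item:1} produces only tightness and therefore only narrow convergence, whereas cases \eqref{item:2} and \eqref{item:3} deliver enough additional control (compact sublevels, respectively uniform integrability of $|x|^2$) to upgrade to $W_2$-convergence, matching the stronger conclusion of the theorem.
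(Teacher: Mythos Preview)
Your proposal is correct and follows essentially the same compactness--plus--semicontinuity route as the paper's own proof. Two minor remarks: in case~\eqref{item:1} you should verify that the narrow limit $\mu_\infty$ actually lies in $\prob_2(\R^d)$ (this follows from $\sup_t\m{\mu_t}<\infty$ together with narrow lower semicontinuity of $\mm$) before evaluating $\V$ there; conversely, your explicit ``every cluster point equals $\bar\nu$, hence full convergence'' step is a point the paper's proof leaves implicit, since it only exhibits a single convergent subsequence.
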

\begin{proof}
Let $\mu_0\in\prob_2(\R^d)$. By Proposition \ref{prop:decreas}, there exists $\mu\in\adm{[0,+\infty)}{\mu_0}$ such that $0\le\V(\mu_t)\le e^{-\alpha\,t}\V(\mu_0)$, for all $t\ge0$. In particular,
\begin{equation}\label{eq:dec2}
\V(\mu_t)\to 0,\text{ as }t\to+\infty.
\end{equation}
i.e. given $K>0$ there exists $\bar{t}>0$ such that $\V(\mu_t)\le K$ for any $t\ge\bar{t}$.
\begin{enumerate}
\item Assume \eqref{item:1} holds. Then, by \eqref{eq:dec2} there exists $\tilde{K}>0$ such that $\m{\mu_t}\le \tilde{K}$ for all $t\ge0$, hence $\{\mu_t\}_{t\ge0}$ is tight thanks to \cite[Remark 5.1.5]{ags} with $\phi(x)=|x|^2$. By Prokhorov's compactness theorem \cite[Theorem 5.1.3]{ags}, there exists $\{t_n\}_{n\in\N}$ and $\bar{\mu}\in\prob(\R^d)$ such that $t_n\to+\infty$ as $n\to+\infty$ and $\mu_{t_n}\weakto\bar{\mu}$ narrowly as $n\to+\infty$. Moreover, since $\prob(\R^d)\ni\nu\mapsto \m{\nu}$ is l.s.c. with respect to narrow convergence (see \cite[Lemma 5.1.7]{ags}), then in particular $\bar{\mu}\in\prob_2(\R^d)$. Since $\V$ is narrowly lower semicontinuous, by \eqref{eq:dec2} we get that
\[0=\lim_{n\to+\infty}\V(\mu_{t_n})\ge\V(\bar{\mu})\ge0,\]
hence $\bar{\mu}=\bar{\nu}$ and we conclude that $\bar{\nu}$ is weakly asymptotically reachable.
\item Assume \eqref{item:2} holds. By \eqref{eq:dec2}, $\{\mu_t\}_{t\ge\bar{t}}$ is a $W_2$-closed subset of the $W_2$-compact set $\{\nu\in\prob_2(\R^d)\,:\,\V(\nu)\le K\}$, hence it is compact as well. Thus, by continuity of $V$ in $W_2$ and reasoning as in the conclusion of the previous item, there exists $\{t_n\}_{n\in\N}$ such that $t_n\to+\infty$ as $n\to+\infty$ and $\mu_{t_n}\overset{W_2}{\to}\bar{\nu}$ as $n\to+\infty$. This proves that $\bar\nu$ is strongly asymptotically reachable.
\item Assume \eqref{item:3} holds. Then, \eqref{eq:dec2} implies that there exists $\tilde{K}>0$ such that $\meps{\mu_t}\le \tilde{K}$ for all $t\ge0$. Reasoning as in item (1), there exists $\{t_n\}_{n\in\N}$ and $\bar{\mu}\in\prob(\R^d)$ such that $t_n\to+\infty$ as $n\to+\infty$ and $\mu_{t_n}\weakto\bar{\mu}$ narrowly as $n\to+\infty$. Moreover, $\{\mu_t\}_{t\ge0}$ has uniformly integrable $2$-moments (see \cite[Equation (5.1.20)]{ags}), hence $\mu_{t_n}\overset{W_2}{\to}\bar{\mu}$ as $n\to+\infty$. We conclude by the continuity of $V$ in $W_2$-metric.
\end{enumerate}
\end{proof}


{\color{black}
\section{Applications}\label{sec:examples}
In this section we give two examples of application of our main result.

\subsection{The second moment as a Lyapunov function}
Let $\alpha>0$ and consider the following set-valued vector field $\F:\R^d\times\prob_2(\R^d)\tto\R^d$ driving the dynamics in Definition \ref{def-macro},
\[\F(x,\nu):=\overline{B\big(0,\alpha\left(|x|+\m{\nu}\right)\big)},\]
so that $\F$ satisfies Hypothesis \ref{hypo}.
We can prove that $\bar{\nu}:=\delta_0$ is weakly asymptotically reachable thanks to Theorem \ref{thm:main}. Indeed, we can show that the function $\V:\prob_2(\R^d)\to[0,+\infty)$, defined by $\V(\nu):=\frac{1}{2}\mm^2(\nu)$, is a Lyapunov function for $\F$ with rate $\varphi$ as in \eqref{eq:phi}, satisfying the required conditions. In particular, by Proposition \ref{prop:decreas}, in order to prove that $\V$ is a Lyapunov function for $\F$ with rate $\varphi$ it is sufficient to prove the following claims:
\begin{itemize}
\item \emph{Claim 1:} $\V$ is $\mm$-locally Lispchitz.
\smallskip

\item \emph{Claim 2:} for any $\mu_0\in\prob_2(\R^d)$ there exists $\mu\in\adm{[0,+\infty)}{\mu_0}$ such that the function $\SS:[0,+\infty)\to[0,+\infty)$, defined by $\SS(t):=e^{\alpha\,t}\V(\mu_t)$, is nonincreasing.
\end{itemize}

Claim 1 is proved in Remark \ref{rmk:m2}. In order to prove Claim 2, we proceed by proving that for any $\tilde\mu_0\in D$ there exists $\tilde\mu\in\adm{[0,+\infty)}{\mu_0}$ such that the function $\SS$ is nonincreasing, where
\[D:=\left\{\nu=\sum_{i=1}^{N}\beta_i\,\delta_{x^i}\,:\,N\in\N,\,(x^i)_i\subset\R^d,\,\beta_i\in[0,1],\,\sum_i\beta_i=1\right\}.\]
Then, Claim 2 follows by density of $D$ into $\prob_2(\R^d)$ w.r.t. the metric $W_2$, relative compactness of $\displaystyle\bigcup_{\nu\in\prob_2(\R^d),\m{\nu}\le C}\adm{[0,T]}{\nu}$ for any $C,T>0$ and continuity of $\V$. 
Let $N\in\N$, $(\beta_i)_{i=1,\dots,N}\subset[0,1]$ with $\sum_{i=1}^N \beta_i=1$ and $(x_0^i)_{i=1,\dots,N}\subset\R^d$. Take $\tilde\mu_0=\sum_{i=1}^N \beta_i\delta_{x_0^i}$, then the trajectory $\tilde\mu=(\tilde\mu_t)_t$, with
\[\tilde\mu_t:=\sum_{i=1}^N \beta_i\delta_{e^{-\alpha t}x_0^i}\]
is admissible for $\F$ starting from $\tilde\mu_0$. Moreover, for any $t\ge0$ we have
\begin{equation*}
\V(\tilde\mu_t)=\frac{1}{2}\mm^2(\tilde\mu_t)=\frac{1}{2}e^{-2\alpha t}\sum_{i=1}^N\beta_i|x_0^i|^2\le\frac{1}{2}e^{-\alpha t}\mm^2(\tilde\mu_0)=e^{-\alpha t}\V(\tilde\mu_0).
\end{equation*}
Following a similar reasoning as that reported at the end of the proof of Corollary \ref{cor:Lviab}, we conclude that $\SS$, defined through $(\tilde\mu_t)_t$, is nonincreasing in $[0,+\infty)$.

\medskip

We can give an intepretation of the proposed example e.g. in social sciences. Indeed, $\F$ can be interpreted as modeling individual behavioral adjustments in a social system, where $x\in\R^d$ represents the state of an agent (e.g., opinion, wealth, or position), and $\nu\in\prob_2(\R^d)$ denotes the distribution of the overall population. The available control at each state is given by a ball centered at the origin with radius proportional to both the agent's deviation from consensus (represented by the origin) and the global dispersion of the population. The model captures situations where agents adjust their state under bounded effort and directional uncertainty, but with a responsiveness that increases with individual deviation and systemic disorder. The asymptotic reachability of the origin corresponds to a collective convergence toward consensus or centralization. 

\subsection{The squared Wasserstein distance as a Lyapunov function} This example, presented for simplicity in dimension $d=2$, is very similar to an example in \cite{aver}. 
We consider the single-valued vector field $\F$ defined, for $ x \in  \R ^2$ and $ \mu \in\prob_2(\R^d)$, as follows
$$ \F (x, \mu) : =A x + \int _ {\R ^2} B y \d \mu (y),$$ where  $A :=   \begin{pmatrix}0 &1\\ 
-1 & 0 
\end{pmatrix}$  and $B$ a negative definite matrix such that for some $k>0$  $$\langle Bz, z\rangle \leq -k |z | ^2 \quad \forall z \in \R ^2.$$ We will consider the associated continuity equation
\begin{equation} \label{CE}
\partial _t \mu _t +\mathrm{div}\left[\left(A x + \int _ {\R ^2} B y \d \mu _t(y)\right)\mu_t\right]=0 \quad\mbox{for a.e. $t$ and $\mu_t$-a.e. $x\in \mathbb R^2 $. }
\end{equation} We consider $\bar \nu $ to be the probability measure with density $\frac1c e ^{-(x_1^2 +x^2_2)}$ (where $c$ is a normalizing constant). It can be shown (cf. \cite[Section 4.1]{aver}) that $ \bar \nu $ is an equilibria of \eqref{CE}. We wish to prove that $\V:\prob_2(\R^2)\to[0,+\infty)$, defined by $\V(\nu):=\frac12 W^2_2 (\nu, \bar \nu )$ is a Lyapunov function for $\F$ with rate $ \varphi (r) := 2 k r$.

 For this aim, we claim that $\V$ satisfies \eqref{eq:Linequality}.

According to \cite[Theorem 3.23, Proposition and Definition 3.1, Proposition 3.11, Remark 3.12]{Jimenez}, it is enough to show that
\begin{equation*}
\varphi(\V(\nu))+\HF(\nu,p)\le 0,\qquad\forall\,\nu\in\prob_2(\R^d),\,\forall\,p\in\partial_{\text{AG}}^-\V(\nu),
\end{equation*}
where $\partial_{\text{AG}}^-\V(\nu)$ is defined in \cite{ag} to be the set of all $p\in\mathrm{Tan}_\nu\prob_2(\R^2)$ (cf. \cite[(8.0.2)]{ags}) such that
for all $\mu\in\prob_2(\R^2)$ and $\ssigma\in\Gamma_o(\nu,\mu)$,
\[\V(\mu)-\V(\nu)\ge \int_{\R^2\times\R^2}\langle p(x),y-x\rangle \d\ssigma(x,y)+o(W_2(\mu,\nu)).\]

 Fix $ \nu  \in\prob_2(\R^d) $; combining \cite[Section 10.2, Theorem 10.2.6]{ags}, \cite[Theorem 3.21, Corollary 3.22]{G2} and \cite[Theorem 3.2]{Al-J} it is possible to show that,
if there exists a measurable function $T:\R^d\to\R^d$ such that $\Gamma_o(\nu,\bar{\nu})=\left\{(\mathrm{id}_{\R^d},T)_\sharp\nu\right\}$, then $\partial_{\text{AG}}^-\V(\nu)=\left\{\mathrm{id}_{\R^d}-T\right\}$; otherwise, $\partial_{\text{AG}}^-\V(\nu)=\emptyset$. Continuity of $\V$ ensures the existence of a dense subset of $\prob_2(\R^d)$ where $\partial_{\text{AG}}^-\V$ is not empty (cf. e.g. \cite[Proposition 3.22]{JMQ21}).

A straightforward adaptation of the argument in \cite[Section 4, Theorem 4.4]{aver} allows us to conclude that for all $ p$ of the form $\mathrm{id}_{\R^2}-T$, with $T$ as above, we get
$$ 2 k \V( \nu) +  \HF(\nu,p) \leq 0.$$
 Our claim is proved. By Theorem \ref{thm:main}, $\bar \nu $ is asymptotically reachable for the dynamical system \eqref{CE}.
}

\appendix
\section{A priori estimates}\label{app0}
In the following, we recall standard a priori estimates (cf. e.g. \cite[Lemma 3.5]{CMattain}, \cite[Appendix A]{CLOS}) that we used in the proof of Proposition \ref{prop:decreas} and in Lemma \ref{lem:LipUg}. We stress that the existence of a representation $\eeta$ for a curve $\mu\in\adm{[a,b]}{\bar{\mu}}$, as in the following statement, is guaranteed by the Superposition Principle \cite[Theorem 8.2.1]{ags} and the growth property of $\F$ \eqref{eq:growthF}.

In the following, $L$ is the Lipschitz constant of $\F$ and $K_\F$ is  given by \eqref{eq:KF}. The map $\ev_t:\R^d\times\rmC([a,b])\to\R^d$ denotes the evaluation operator at time $t\in[a,b]$, defined by $\ev_t(x,\gamma):=\gamma(t)$.
\begin{lemma}\label{lem:attainCM}
Assume Hypothesis \ref{hypo}. Let $0\le a< b$, $\bar{\mu}\in\prob_2(\R^d)$ and $\mu\in\adm{[a,b]}{\bar{\mu}}$. Let $\eeta\in\prob(\R^d\times\rmC([a,b]))$ be such that
\begin{enumerate}
\item ${\ev_t}_\sharp\eeta=\mu_t$ for all $t\in[a,b]$;
\item for $\eeta$-a.e. $(x,\gamma)\in\R^d\times\rmC([a,b])$ and a.e. $t\in[a,b]$, $\gamma$ is an absolutely continuous curve such that $\gamma(a)=x$ and $\dot\gamma(t)\in\F(\gamma(t),\mu_t)$.
\end{enumerate}
Then
\begin{enumerate}
\item $\displaystyle\m{\mu_t}\le e^{L(b-a) e^{L(b-a)}}C_{[a,b]}(\bar{\mu})$, for any $t\in[a,b]$;
\item $\displaystyle\int_{\R^d\times\rmC([a,b])}\|\dot\gamma\|^2_{L^\infty([a,b])}\d\eeta(x,\gamma)\le D_{[a,b]}(\bar{\mu})$,
\end{enumerate}
where
\begin{equation}\label{last}
\begin{split}
C_{[a,b]}(\bar{\mu})&:=e^{L(b-a)}\left(\m{\bar{\mu}}+(K_\F (b-a)\right);\\
D_{[a,b]}(\bar{\mu})&:=\left(K_\F+L\,\left[e^{L(b-a) e^{L(b-a)}}{\color{black}C_{[a,b]}(\bar{\mu})}+{\color{black}C_{[a,b]}(\bar{\mu})}+L(b-a)e^{L(b-a)e^{L(b-a)}} {\color{black}C_{[a,b]}(\bar{\mu})}\right]\right)^2.
\end{split}
\end{equation}
\end{lemma}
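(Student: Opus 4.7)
The whole argument rests on the growth bound that follows from Hypothesis \ref{hypo} and the definition \eqref{eq:KF} of $K_\F$, namely that for $\eeta$-a.e.\ $(x,\gamma)$ and a.e.\ $t\in[a,b]$,
\[
|\dot\gamma(t)|\le K_\F+L\bigl(|\gamma(t)|+\m{\mu_t}\bigr),
\]
obtained by applying \eqref{item:h2} of Hypothesis \ref{hypo} between $(\gamma(t),\mu_t)$ and $(0,\delta_0)$. All the constants in \eqref{last} will be produced by iterating Gronwall's inequality on top of this bound, taking advantage of the fact that $\m{\mu_t}=\bigl(\int|\gamma(t)|^2\,\d\eeta\bigr)^{1/2}$ thanks to $\ev_t{}_\sharp\eeta=\mu_t$.

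\textbf{Step 1: second moment bound.} Starting from $\gamma(t)=x+\int_a^t\dot\gamma(s)\,\d s$, I would first derive the pathwise estimate
\[
|\gamma(t)|\le|x|+K_\F(t-a)+L\int_a^t|\gamma(s)|\,\d s+L\int_a^t\m{\mu_s}\,\d s
\]
and apply Gronwall's lemma in $|\gamma(\cdot)|$ to obtain
\[
|\gamma(t)|\le e^{L(b-a)}\!\left[|x|+K_\F(b-a)+L\!\int_a^t\m{\mu_s}\,\d s\right].
\]
Squaring, integrating against $\eeta$, and using Minkowski's inequality in $L^2(\eeta)$ gives, with $M(t):=\m{\mu_t}$,
\[
M(t)\le e^{L(b-a)}\!\left[M(a)+K_\F(b-a)+L\!\int_a^t M(s)\,\d s\right]=C_{[a,b]}(\bar\mu)+e^{L(b-a)}L\!\int_a^t M(s)\,\d s.
\]
A second application of Gronwall's lemma produces precisely $M(t)\le e^{L(b-a)e^{L(b-a)}}\,C_{[a,b]}(\bar\mu)$, which is the required estimate (1) and explains the characteristic double-exponential factor.

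\textbf{Step 2: uniform velocity bound.} Knowing now that $\m{\mu_s}\le A:=e^{L(b-a)e^{L(b-a)}}C_{[a,b]}(\bar\mu)$ for every $s\in[a,b]$, the growth bound becomes $|\dot\gamma(t)|\le K_\F+L|\gamma(t)|+LA$, so another pathwise Gronwall gives
\[
\|\gamma\|_{L^\infty([a,b])}\le \bigl(|x|+K_\F(b-a)+L(b-a)A\bigr)e^{L(b-a)},
\]
and consequently
\[
\|\dot\gamma\|_{L^\infty([a,b])}\le K_\F+L\|\gamma\|_{L^\infty([a,b])}+LA.
\]
Taking $L^2(\eeta)$ norms, using Minkowski's inequality and the identity $(\int|x|^2\d\eeta)^{1/2}=M(a)$, the right-hand side becomes a sum of the three quantities $A$, $C_{[a,b]}(\bar\mu)$ and $L(b-a)A$ (each multiplied by $L$), plus $K_\F$, which is exactly the square root of $D_{[a,b]}(\bar\mu)$ in \eqref{last}. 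Squaring yields (2).

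\textbf{Expected difficulty.} The only delicate point is the bookkeeping: the self-referential nature of the dynamics (the bound on $\dot\gamma$ involves $\m{\mu_t}$, which is itself an $L^2(\eeta)$-average of $|\gamma(t)|$) forces one to first apply Gronwall pathwise and then take the $L^2(\eeta)$-norm, and to iterate this procedure for the second moment. Once this order of operations is respected, both estimates are routine; the only non-obvious feature, the double-exponential dependence, emerges automatically from the two successive Gronwall steps in Step 1.
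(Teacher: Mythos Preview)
Your proposal is correct and follows essentially the same route as the paper: both derive the pointwise bound $|\dot\gamma(t)|\le K_\F+L(|\gamma(t)|+\m{\mu_t})$, apply a first Gronwall pathwise to control $|\gamma(t)|$, pass to the $L^2_{\eeta}$-norm and apply a second Gronwall to obtain the double-exponential moment bound, and finally feed everything back into the velocity estimate and take $L^2_{\eeta}$-norms again. The only cosmetic difference is that the paper keeps the integral $\int_a^t\m{\mu_\tau}\,\d\tau$ inside the velocity bound (its \eqref{eq:m3}) and substitutes the moment estimate at the very end, whereas you substitute $\m{\mu_s}\le A$ first; this changes nothing of substance, and the minor discrepancy you may notice in the exact form of $D_{[a,b]}(\bar\mu)$ (an extra factor $e^{L(b-a)}$ on the last term) is a bookkeeping artifact present in the paper's own computation as well.
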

\begin{proof}
For $\eeta$-a.e. $(x,\gamma)$ and a.e. $t\in[a,b]$ we have
\[\dot\gamma(t)\in \F(\gamma(t),\mu_t)\subset \F(0,\delta_0)+L\left(|\gamma(t)|+\m{\mu_t}\right) \overline{B(0,1)},\]
hence
\begin{equation}\label{eq:m1}
|\dot\gamma(t)|\le K_\F+L\left(|\gamma(t)|+\m{\mu_t}\right).
\end{equation}
Moreover, for any $t\in[a,b]$,
\[|\gamma(t)|-|\gamma(a)|\le \int_a^t|\dot\gamma(\tau)|\d\tau\le K_\F (b-a)+L\left(\int_a^t\left(|\gamma(\tau)|+\m{\mu_\tau}\right)\d\tau\right).\]
By Gr\"{o}nwall's inequality, this gives
\begin{equation}\label{eq:m2}
|\gamma(t)|\le e^{L(b-a)}\left(|\gamma(a)|+K_\F (b-a)+L\int_a^t\m{\mu_\tau}\d\tau\right),
\end{equation}
and together with \eqref{eq:m1}, we have
\begin{equation}\label{eq:m3}
\|\dot\gamma\|_{L^\infty([a,b])}\le K_\F +L\left[\sup_{t\in[a,b]}\m{\mu_t}+e^{L(b-a)}\left(|\gamma(a)|+K_\F (b-a)+L\int_a^b\m{\mu_\tau}\d\tau\right)\right].
\end{equation}
By taking the $L^2_{\eeta}$-norm of \eqref{eq:m2} and using triangular inequality, we get
\begin{equation}\label{eq:m4}
\m{\mu_t}\le e^{L(b-a)}\left(\m{\bar{\mu}}+K_\F (b-a)+L\int_a^t\m{\mu_\tau}\d\tau\right),
\end{equation}
thus, by Gr\"{o}nwall's inequality,
\[\m{\mu_t}\le e^{L(b-a)e^{L(b-a)}}\left[e^{L(b-a)}\left(\m{\bar{\mu}}+K_\F (b-a)\right)\right],\]
which proves item (1).
By taking the $L^2_{\eeta}$-norm of \eqref{eq:m3} and using triangular inequality we get item (2).
\end{proof}
\section{Proof of Comparison Principle}\label{sec:appA}
In this section, devoted to the proof of Theorem \ref{thm:comparison}, we recall some results proved in \cite{JMQ20} which have been used in the proof of Proposition \ref{prop:decreas}. We mention that the Comparison Principle in \cite[Theorem 2]{JMQ20} for \eqref{eq:HJBintro} is proved in the context of bounded and uniformly continuous viscosity solutions $U:[0,T]\times\prob_2(\R^d)\to \R$. Due to the lack of boundedness in our framework, we readapt the result in Theorem \ref{thm:comparison}.

We recall the following notation taken from \cite[Section 5.1]{JMQ20}. Given $\nu_1,\nu_2\in\prob_2(\R^d)$, we consider the unique optimal plan $\ppi\in\Gamma_o(\nu_1,\nu_2)$ satisfying
\[\ppi=\argmin\left\{\int_{\R^d}\left|x-\int_{\R^d}y\d\pi_x(y)\right|^2\d\nu_1(x)\,:\,\pi=\nu_1\otimes\pi_x\in\Gamma_o(\nu_1,\nu_2)\right\},\]
where the notation $\{\pi_x\}_x\subset\prob_2(\R^d)$ stands for the family obtained by disintegrating $\pi$ w.r.t. the projection on the first marginal $\nu_1$  (cf. \cite[Theorem 5.3.1]{ags}).

Denote by $\ppi^{-1}\in\Gamma_o(\nu_2,\nu_1)$ the inverse of $\ppi$, i.e. $\ppi^{-1}=\sfs_\sharp\ppi$ where $\sfs(x,y)=(y,x)$, and by $\{\ppi^{-1}_y\}_y\subset\prob_2(\R^d)$ the family obtained by disintegration of $\ppi^{-1}$ w.r.t. the projection on the first marginal $\nu_2$.
We define $p_{\nu_1,\nu_2}\in L^2_{\nu_1}$ and $q_{\nu_1,\nu_2}\in L^2_{\nu_2}$ by
\begin{equation}\label{def:pq}
\begin{cases}
p_{\nu_1,\nu_2}(x):=\displaystyle x-\int_{\R^d}y\d \ppi_x(y),\\
q_{\nu_1,\nu_2}(y):=\displaystyle y-\int_{\R^d}x\d \ppi^{-1}_y(x)
\end{cases}
\end{equation}

In what follows, we denote by $(X, d_X)$ and $(X\times X,d_{X\times X})$ the metric spaces given by
\begin{equation*}
\begin{split}
& X:=[0,T]\times \prob_2(\R^d),\\
& d_X\left(r_1,r_2\right):=\sqrt{(s_1-s_2)^2+W_2^2(\nu_1,\nu_2)},\\
& d_{X\times X}(z_1,z_2):=d_X(r_1,r_2)+d_X(\hat{r}_1,\hat{r}_2),
\end{split}
\end{equation*}
with $r_i:=(s_i,\nu_i)\in X$, $z_i:=(r_i,\hat{r}_i)\in X\times X$. Notice that both $(X, d_X)$ and $(X\times X,d_{X\times X})$ are complete.

\begin{proof}[Proof of Theorem \ref{thm:comparison}]
The proof follows mainly the structure of \cite[proof of Theorem 2]{JMQ20} with minor variations due to the lack of boundedness assumption on $u_i$.

Let
\[A:=\inf_{\nu\in\prob_2(\R^d)}\left\{u_2(T,\nu)-u_1(T,\nu)\right\}.\]
If $A= -\infty$, the statement is trivially verified. So let $A\neq-\infty$; since $u_1-A$ is still a subsolution of \eqref{eq:HJBcomp}, we can assume $A=0$ with no loss of generality. Let
\[-\xi:=\inf_{(t,\nu)\in[0,T]\times\prob_2(\R^d)}\left\{u_2(t,\nu)-u_1(t,\nu)\right\}.\]
Assume by contradiction that $-\xi<0$.
{\color{black}Without loss of generality, we may assume that $-\xi \neq -\infty$; otherwise, the proof proceeds analogously by replacing all occurrences of $\xi$ in what follows with an arbitrarily fixed $B > 0$.}
Fix $(t_0,\nu_0)\in X$ such that
\[u_2(t_0,\nu_0)-u_1(t_0,\nu_0)<-\frac{\xi}{2}.\]
By \eqref{item:comp1} we can assume $t_0\neq 0$; fix $\sigma>0$ such that $\frac{2\sigma}{t_0}\le\frac{\xi}{8}$, and let $R>0$.
Given $\eps, \eta>0$ we define the functional $\Phi_{\eps,\eta}: X\times X\to \R\cup\{+\infty\}$ by
\begin{equation*}
\begin{split}
\Phi_{\eps,\eta}(s,\nu_1,t,\nu_2)=&u_2(t,\nu_2)-u_1(s,\nu_1)+\frac{1}{2\eps}d^2_X\left((s,\nu_1),(t,\nu_2)\right)+\\
&-\eta s+\frac{\sigma}{s}+\frac{\sigma}{t},
\end{split}
\end{equation*}
if $st\ne0$ and $W_2(\nu_0,\nu_i)\le R$ for $i=1,2$; otherwise we set 
\begin{equation*}
\Phi_{\eps,\eta}(s,\nu_1,t,\nu_2)=+\infty.
\end{equation*}

By triangular inquality, notice that if $W_2(\nu_0,\nu_i)\le R$, then
\begin{equation}\label{barR}
\m{\nu_i}=W_2(\nu_i,\delta_0)\le R+\m{\nu_0}=:\bar{R},\quad i=1,2.
\end{equation}
Hence, if $st\ne0$ and $W_2(\nu_0,\nu_i)\le R$ for $i=1,2$, we have
\begin{equation*}
\begin{split}
u_2(t,\nu_2)-u_1(s,\nu_1)&=u_2(t,\nu_2)-u_2(t,\nu_0)+u_2(t,\nu_0)-u_1(s,\nu_0)+u_1(s,\nu_0)-u_1(s,\nu_1)\\
&\ge -L_{\bar{R},2} W_2(\nu_2,\nu_0)+u_2(t,\nu_0)-u_1(s,\nu_0)-L_{\bar{R},1} W_2(\nu_0,\nu_1)\\
&\ge -(L_{\bar{R},2}+L_{\bar{R},1}) R+m_2-M_1,
\end{split}
\end{equation*}
where we used assumption \ref{item:comp2} for the first inequality and \ref{item:comp1} for the existence of $m_i, M_i\in\R$ such that $m_i\le u_i(r,\nu_0)\le M_i$, for any $r\in[0,T]$.
Hence, $\Phi_{\eps,\eta}$ is bounded from below and lower semicontinuous in the complete metric space $(X\times X,d_{X\times X})$. Define $z_0=(t_0,\nu_0,t_0,\nu_0)$; by Ekeland Variational Principle, for any $\delta>0$ there exists $z_{\eps \eta \delta}=(s_{\eps \eta \delta}, \nu^1_{\eps \eta \delta},t_{\eps \eta \delta},\nu^2_{\eps \eta \delta})\in X\times X$ such that, for any $z=(s,\nu_1,t,\nu_2)$ we have
\begin{equation}\label{eq:Ekeland}
\begin{cases}
\Phi_{\eps,\eta}(z_{\eps \eta \delta})\le \Phi_{\eps,\eta}(z_0),\\
\Phi_{\eps,\eta}(z_{\eps \eta \delta})\le \Phi_{\eps,\eta}(z)+\delta d_{X\times X}(z,z_{\eps \eta \delta}).
\end{cases}
\end{equation}
In particular, $t_{\eps \eta \delta},s_{\eps \eta \delta}\neq 0$ and $W_2(\nu_0,\nu^i_{\eps \eta \delta})\le R$, $i=1,2$.

Let $z_1=(s_{\eps \eta \delta},\nu^1_{\eps \eta \delta},s_{\eps \eta \delta},\nu^1_{\eps \eta \delta})$ and $z_2=(t_{\eps \eta \delta},\nu^2_{\eps \eta \delta},t_{\eps \eta \delta},\nu^2_{\eps \eta \delta})$. By summing up the second inequality in \eqref{eq:Ekeland} with $z=z_1$ and with the choice $z=z_2$, we get
\begin{equation}\label{eq:chain1}
u_2(t_{\eps \eta \delta},\nu^2_{\eps \eta \delta})-u_2(s_{\eps \eta \delta},\nu^1_{\eps \eta \delta}) + u_1(t_{\eps \eta \delta},\nu^2_{\eps \eta \delta})-u_1(s_{\eps \eta \delta},\nu^1_{\eps \eta \delta})+\frac{1}{\eps}\rho^2_{\eps \eta \delta}
\le \rho_{\eps \eta \delta}(2\delta+\eta),
\end{equation}
where $\rho_{\eps \eta \delta}:=d_X\left((s_{\eps \eta \delta},\nu^1_{\eps \eta \delta}),(t_{\eps \eta \delta},\nu^2_{\eps \eta \delta})\right)$.

Recall that, thanks to assumptions \ref{item:comp1}, \ref{item:comp2} and \eqref{barR}, we have
\begin{equation}\label{eq:chain2}
\begin{split}
&u_i(t_{\eps \eta \delta},\nu^2_{\eps \eta \delta})-u_i(s_{\eps \eta \delta},\nu^1_{\eps \eta \delta})\\
&=u_i(t_{\eps \eta \delta},\nu^2_{\eps \eta \delta})-u_i(t_{\eps \eta \delta},\nu_0)+u_i(t_{\eps \eta \delta},\nu_0)-u_i(s_{\eps \eta \delta},\nu_0)+u_i(s_{\eps \eta \delta},\nu_0)-u_i(s_{\eps \eta \delta},\nu^1_{\eps \eta \delta})\\
&\ge -L_{\bar{R},i} W_2(\nu^2_{\eps \eta \delta},\nu_0)+m_i-M_i-L_{\bar{R},i} W_2(\nu_0,\nu^1_{\eps \eta \delta})\\
&\ge -2 L_{\bar{R},i} R+m_i-M_i,
\end{split}
\end{equation}
for $i=1,2$.

Fix $0<\eta<1$; we aim to prove that 
\begin{equation}\label{claim1CP}
\lim_{\eps, \delta\to 0^+}\rho_{\eps \eta \delta}=0 \quad\text{and}\quad \lim_{\eps, \delta\to 0^+}\frac{1}{\eps}\rho^2_{\eps \eta \delta}=0.
\end{equation}
We start by showing that the first limit cannot be $+\infty$. By contradiction, assume that there exist sequences $\{\eps_n\}_n,\,\{\delta_n\}_n$ with $\eps_n,\delta_n\to0^+$ such that $\lim_{n\to+\infty}\rho_{\eps_n \eta \delta_n}=+\infty$. Then, for $n$ sufficiently large we have $\eps_n,\delta_n\le \frac{1}{2}$ and, by \eqref{eq:chain1} and \eqref{eq:chain2}, recalling that $\eta<1$ we get
\[-2R(L_{\bar{R},1}+L_{\bar{R},2})+m_1+m_2-(M_1+M_2)+2\rho^2_{\eps_n \eta \delta_n}< 2 \rho_{\eps_n \eta \delta_n}.\]
This gives a contradiction, so the first limit in \eqref{claim1CP} is finite.

Assume by contradiction that there exist $\alpha>0$ and sequences $\{\eps_n\}_n,\,\{\delta_n\}_n$ with $\eps_n,\delta_n\to0^+$ such that $\lim_{n\to+\infty}\rho_{\eps_n \eta \delta_n}=2\alpha$. Then, for $n$ sufficiently large we have $\alpha<\rho_{\eps_n \eta \delta_n}<3\alpha$, so that from \eqref{eq:chain1} and \eqref{eq:chain2}, we have
\[-2R(L_{\bar{R},1}+L_{\bar{R},2})+m_1+m_2-(M_1+M_2)+\frac{\alpha^2}{\eps_n}\le 3\alpha (2\delta_n+\eta).\]
Since the right hand side is bounded, while the left hand side tends to $+\infty$, we get a contradiction. This proves the first relation in \eqref{claim1CP}.

In order to prove the second relation in \eqref{claim1CP}, notice that from \eqref{eq:chain1} we get
\begin{equation*}
\begin{split}
\frac{1}{\eps}\rho^2_{\eps \eta \delta}\le &\rho_{\eps \eta \delta} (2\delta+\eta)+|u_2(t_{\eps \eta \delta},\nu^2_{\eps \eta \delta})-u_2(s_{\eps \eta \delta},\nu^1_{\eps \eta \delta}) |+\\
&+ |u_1(t_{\eps \eta \delta},\nu^2_{\eps \eta \delta})-u_1(s_{\eps \eta \delta},\nu^1_{\eps \eta \delta})|.
\end{split}
\end{equation*}
By continuity of $u_1,u_2$, the right hand side tends to $0$ as $\eps,\delta\to0^+$, and this proves the second relation in \eqref{claim1CP}.

\medskip

The identities in \eqref{claim1CP} are used in the remaining part of the proof to get that for $\eps, \delta, \eta>0$ sufficiently small, we have $t_{\eps \eta \delta},s_{\eps \eta \delta}\not\in(0,T)$. This is done arguing by contradiction using \cite[Lemma 6]{JMQ20} together with assumption \eqref{item:comp3} in Theorem \ref{thm:comparison} and \eqref{eq:assHCP}. Finally, we can prove that $t_{\eps \eta \delta},s_{\eps \eta \delta}\neq T$ by using the definition of $\xi$, \eqref{eq:Ekeland} and assumption \eqref{item:comp2} in Theorem \ref{thm:comparison}. This part of the proof, being identical to \cite[Theorem 2, claims 2,3]{JMQ20}, is not reported here.

Since we finally get $t_{\eps \eta \delta},s_{\eps \eta \delta}\not\in[0,T]$, this implies $\xi=0$, hence the conclusion.
\end{proof}

The proof of the following result is provided in \cite[Theorem 3 - step1]{JMQ20}, with running cost $\mathcal L=0$.
\begin{lemma}\label{lem:Hok}
Assume Hypothesis \ref{hypo}. The Hamiltonian $\HF$ in \eqref{eq:H} satisfies the assumptions of Theorem \ref{thm:comparison} with $\omega_{\HF}(r,s)=Ls$, where $L$ is the Lipschitz constant of $\F$.
\end{lemma}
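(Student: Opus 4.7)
The plan is to verify the pointwise estimate
\[
\bigl|\HF(\nu_1,\lambda p_{\nu_1,\nu_2})-\HF(\nu_2,\lambda q_{\nu_1,\nu_2})\bigr|\le L\,\lambda\, W_2^2(\nu_1,\nu_2)
\]
by rewriting both Hamiltonians as integrals against the distinguished optimal plan $\ppi\in\Gamma_o(\nu_1,\nu_2)$ introduced just before \eqref{def:pq} (so that $\nu_1=\pi^1_\sharp\ppi$, $\nu_2=\pi^2_\sharp\ppi$), and then comparing pointwise integrands by a Filippov-type measurable selection powered by Hypothesis \ref{hypo}\eqref{item:h2}. Throughout I denote $p=p_{\nu_1,\nu_2}$ and $q=q_{\nu_1,\nu_2}$, and use the disintegrations $\ppi=\nu_1\otimes\ppi_x=\nu_2\otimes\ppi^{-1}_y$.

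Concretely, fix $\eps>0$. By measurable selection from the Hausdorff-Lipschitz, convex-compact-valued map $y\mapsto\F(y,\nu_2)$, there is a Borel $v_2^*:\R^d\to\R^d$ with $v_2^*(y)\in\F(y,\nu_2)$ and $\langle q(y),v_2^*(y)\rangle\le\inf_{v\in\F(y,\nu_2)}\langle q(y),v\rangle+\eps$. Hypothesis \ref{hypo}\eqref{item:h2} produces a jointly Borel $\tilde v_1(x,y)\in\F(x,\nu_1)$ with $|\tilde v_1(x,y)-v_2^*(y)|\le L(|x-y|+W_2(\nu_1,\nu_2))$ for $\ppi$-a.e.\ $(x,y)$. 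By Hypothesis \ref{hypo}\eqref{itemh1} the averaged field $w(x):=\int_{\R^d}\tilde v_1(x,y)\d\ppi_x(y)$ still lies in $\F(x,\nu_1)$, and is therefore admissible for the infimum defining $\HF(\nu_1,\lambda p)$. Using the disintegration identity $\int f(x,y)\d\ppi_x(y)\d\nu_1(x)=\int f\d\ppi$, this gives
\[
\HF(\nu_1,\lambda p)\le \lambda\!\int\!\langle p(x),w(x)\rangle\d\nu_1(x)=\lambda\!\int\!\langle p(x),\tilde v_1(x,y)\rangle\d\ppi(x,y),
\]
while $\HF(\nu_2,\lambda q)\ge\lambda\int\langle q(y),v_2^*(y)\rangle\d\ppi(x,y)-\eps$. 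Subtracting and decomposing
\[
\langle p(x),\tilde v_1\rangle-\langle q(y),v_2^*\rangle=\langle p(x),\tilde v_1-v_2^*\rangle+\langle p(x)-q(y),v_2^*\rangle
\]
reduces the task to bounding the two integrals on the right.

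For the first integral, combining $|\tilde v_1-v_2^*|\le L(|x-y|+W_2)$ with Cauchy--Schwarz and the crucial a priori bound $\|p\|_{L^2(\nu_1)}\le W_2(\nu_1,\nu_2)$ (which follows from Jensen applied to $p(x)=x-\int y\d\ppi_x(y)$, and symmetrically for $q$) yields a contribution of order $L\lambda W_2^2(\nu_1,\nu_2)$. The second integral is the main obstacle: a naive bound by Cauchy--Schwarz would produce a term of the form $W_2(\nu_1,\nu_2)\cdot\|v_2^*\|_{L^2(\nu_2)}$, which is only $O(\lambda W_2)$. The decisive step is to exploit the precise disintegration formulas \eqref{def:pq} for $p$ and $q$, writing $\int\langle p(x),v_2^*(y)\rangle\d\ppi$ and $\int\langle q(y),v_2^*(y)\rangle\d\ppi$ via Fubini so that the offending bary\-centric terms telescope, leaving only contributions controlled by the $L^2(\nu_i)$-norms of $p,q$, each of which is bounded by $W_2$. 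This telescoping is the specific purpose of choosing the variance-minimizing optimal plan in \eqref{def:pq}.

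Finally, a symmetric argument—swapping the roles of $(\nu_1,p)$ and $(\nu_2,q)$, using the selection that lifts an $\eps$-minimizer for $\HF(\nu_1,\lambda p)$ to a competitor in $\F(y,\nu_2)$ via the conditional average against $\ppi^{-1}_y$—gives the reverse inequality. Letting $\eps\to 0^+$ closes the estimate with the announced modulus $\omega_{\HF}(r,s)=Ls$. This is precisely the content of step~1 of the proof of \cite[Theorem~3]{JMQ20}, which is the reference invoked in the lemma.
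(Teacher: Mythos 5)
The paper gives no argument for this lemma: it is proved by citation to \cite[Theorem 3, step 1]{JMQ20} (with $\mathcal L=0$), so your sketch has to stand on its own, and it does not. The first half is fine (the measurable $\eps$-optimal selection $v_2^*$, the Lipschitz lift $\tilde v_1(x,y)\in\F(x,\nu_1)$, the inequality $\HF(\nu_1,\lambda p)\le\lambda\int\langle p(x),\tilde v_1(x,y)\rangle\d\ppi$, the Jensen bound $\|p_{\nu_1,\nu_2}\|_{L^2_{\nu_1}}\le W_2(\nu_1,\nu_2)$, and the estimate of $\lambda\int\langle p(x),\tilde v_1-v_2^*\rangle\d\ppi$ by a multiple of $L\lambda W_2^2$). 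The proof collapses exactly at the step you call decisive: the assertion that in $\int\langle p_{\nu_1,\nu_2}(x)-q_{\nu_1,\nu_2}(y),v_2^*(y)\rangle\d\ppi(x,y)$ the barycentric terms \emph{telescope} after Fubini. They do not. With the definitions \eqref{def:pq}, if $\ppi$ is induced by an invertible optimal map $T$ (for instance $\nu_2$ a translate of $\nu_1$, $T=\mathrm{id}+c$), then $\ppi$-a.e. $p_{\nu_1,\nu_2}(x)=x-T(x)=-c$ and $q_{\nu_1,\nu_2}(y)=T(x)-x=c$, so $p(x)-q(y)=-2c$: the two barycentric contributions add rather than cancel, and this term is genuinely of size $\lambda\,W_2(\nu_1,\nu_2)\,\|v_2^*\|_{L^2_{\nu_2}}$, i.e. exactly the $O(\lambda W_2)$ order you had already identified as insufficient. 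Note, moreover, that even the conclusion you announce for that step (``contributions controlled by the $L^2$-norms of $p,q$'') could only give $O(\lambda W_2)$, since $\|v_2^*\|_{L^2_{\nu_2}}$ is of order one, not of order $W_2$; so the step is both unproved and, as formulated, insufficient for the target bound.

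The gap is not a removable technicality of your decomposition. Take $\F\equiv\overline{B(0,1)}$ (which satisfies Hypothesis \ref{hypo}), so that $\HF(\nu,\xi)=-\int|\xi|\d\nu$, and take $\nu_1=\delta_0$, $\nu_2=\tfrac12(\delta_{-a}+\delta_{a})$. The only coupling is $\ppi=\delta_0\otimes\nu_2$, whence $p_{\nu_1,\nu_2}\equiv0$ and $q_{\nu_1,\nu_2}(y)=y$, and therefore $|\HF(\nu_1,\lambda p_{\nu_1,\nu_2})-\HF(\nu_2,\lambda q_{\nu_1,\nu_2})|=\lambda|a|=\lambda W_2(\nu_1,\nu_2)$, which is not bounded by $L\lambda W_2^2(\nu_1,\nu_2)$ for $|a|<1/L$ (and the same value occurs if one tests the second slot with $-\lambda q_{\nu_1,\nu_2}$). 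So the pointwise estimate you set out to verify, with the conventions \eqref{def:pq}--\eqref{eq:assHCP} taken literally, cannot be reached by any refinement of the Fubini/telescoping device: whatever the correct verification is in \cite[Theorem 3, step 1]{JMQ20}, it must exploit the precise sign and pairing of the two test directions through the plan (and the structure of $\HF$, e.g.\ its concavity and positive homogeneity in $p$), none of which your sketch engages with. As written, the central estimate of your proposal is missing, and the closing appeal to the reference does not supply it.
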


The following is an easy adaptation of \cite[Proposition 4]{JMQ20} and \cite[Theorem 3 - step2]{JMQ20} to the case of $\mm$-locally Lipschitz final cost $g$ in place of bounded and uniformly continuous. We report only the essential steps of the proof for the reader's convenience.
\begin{lemma}\label{lem:LipUg}
Assume Hypothesis \ref{hypo}. Let $g:\prob_2(\R^d)\to[0,+\infty)$ be a $\mm$-locally Lipschitz map and $U_g:[0,T]\times\prob_2(\R^d)\to\R$ be the value function of the associated Mayer problem, defined by
\begin{equation}\label{eq:MayerProbl}
U_g(t,\nu):=\inf\left\{g(\mu_{T})\,:\,\mu\in\adm{[t,T]}{\nu}\right\}.
\end{equation}
Then the following hold
\begin{enumerate}
\item\label{item:contUg} the map $t\mapsto U_g(t,\nu)$ is continuous in $[0,T]$ for all $\nu\in\prob_2(\R^d)$;
\item\label{item:LipmuUg} the map $\nu\mapsto U_g(t,\nu)$ is $\mm$-locally Lipschitz in $\prob_2(\R^d)$, uniformly w.r.t. $t$, i.e. for any $R>0$ there exists $L_{R,U_g}>0$ such that
\begin{equation*}
|U_g(t,\nu_1)-U_g(t,\nu_2)|\le L_{R,U_g} W_2(\nu_1,\nu_2),
\end{equation*}
for any $\nu_i\in\prob_2(\R^d)$ such that $\m{\nu_i}\le R$, $i=1,2$, for any $t\in[0,T]$.
\end{enumerate}
Moreover, $U_g$ is a viscosity solution of
\begin{equation*}
\begin{cases}
\partial_t U(t,\nu)+\HF(\nu,D_\nu U(t,\nu))=0,\\
U(T,\nu)=g(\nu)
\end{cases}
\end{equation*}
according with Definition \ref{def:viscosol}.
\end{lemma}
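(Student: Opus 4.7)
The plan is to follow the strategy of \cite[Proposition 4 and Theorem 3, step 2]{JMQ20}, adapting the constants to the $\mm$-locally Lipschitz setting. I would first establish item \eqref{item:LipmuUg}, the uniform $\mm$-local Lipschitz estimate in $\nu$, since both item \eqref{item:contUg} and the viscosity property ultimately rest on it. Fix $R>0$ and $\nu_1,\nu_2\in\prob_2(\R^d)$ with $\m{\nu_i}\le R$. For $\eps>0$, take an $\eps$-optimal trajectory $\mu^{(1)}\in\adm{[t,T]}{\nu_1}$ for $U_g(t,\nu_1)$, represented via the Superposition Principle by some $\eeta^{(1)}$. Using a Filippov-type selection along an optimal coupling between $\nu_1$ and $\nu_2$, as in \cite[Proposition 4]{JMQ20}, I would build a companion admissible trajectory $\mu^{(2)}\in\adm{[t,T]}{\nu_2}$ satisfying the Gr\"onwall-type estimate
\[W_2\bigl(\mu^{(1)}_T,\mu^{(2)}_T\bigr)\le e^{L(T-t)}\,W_2(\nu_1,\nu_2).\]
By Lemma \ref{lem:attainCM}, the $2$-moments of $\mu^{(i)}_T$ stay below some $\bar R=\bar R(R,T)$; combining with the $\mm$-local Lipschitz constant $L_{\bar R}$ of $g$ one gets
\[U_g(t,\nu_2)\le g(\mu^{(2)}_T)\le g(\mu^{(1)}_T)+L_{\bar R}\,e^{L(T-t)}\,W_2(\nu_1,\nu_2)\le U_g(t,\nu_1)+\eps+L_{\bar R}\,e^{LT}\,W_2(\nu_1,\nu_2).\]
Letting $\eps\to0$ and swapping the roles of $\nu_1,\nu_2$ yields \eqref{item:LipmuUg} with $L_{R,U_g}:=L_{\bar R}\,e^{LT}$, independent of $t$.

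Item \eqref{item:contUg} then follows from the Dynamic Programming Principle (\cite[Proposition 3]{JMQ20}) together with item \eqref{item:LipmuUg}: for $0\le s\le t\le T$ and any $\mu\in\adm{[s,t]}{\nu}$, the DPP gives $U_g(s,\nu)\le U_g(t,\mu_t)$, hence
\[U_g(s,\nu)-U_g(t,\nu)\le L_{R',U_g}\,W_2(\mu_t,\nu),\]
for an appropriate $R'$ controlled by Lemma \ref{lem:attainCM}, which also bounds $W_2(\mu_t,\nu)$ by a quantity infinitesimal in $|t-s|$. The opposite inequality is obtained symmetrically by picking a near-optimal $\mu\in\adm{[s,T]}{\nu}$ and using its restriction to $[t,T]$ as a competitor for $U_g(t,\mu_t)$.

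For the viscosity property the terminal condition $U_g(T,\nu)=g(\nu)$ is immediate. The sub/supersolution inequalities follow exactly as in \cite[Theorem 3, step 2]{JMQ20}. For the supersolution inequality I would pick an optimal trajectory $\mu^*$ starting at $\bar\nu$ at time $\bar t$ (which exists by \cite[Corollary 2]{JMQ20}); since $t\mapsto U_g(t,\mu^*_t)$ is constant by the DPP, testing any $(p_t,p_\nu)\in\boldsymbol{D}^-_\delta U_g(\bar t,\bar\nu)$ along the plans $\tilde{\ppi}_t=(\ev_{\bar t},\ev_t)_\sharp\eeta^*$ and letting $t\to\bar t^+$ yields $p_t+\HF(\bar\nu,p_\nu)\le C(\bar\nu)\delta$ via the same Jensen/Fatou argument used in the proof of Proposition \ref{prop:decreas}. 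The subsolution inequality is obtained symmetrically, using arbitrary admissible trajectories and the fact that $t\mapsto U_g(t,\mu_t)$ is nondecreasing along them.

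The main technical obstacle is in item \eqref{item:LipmuUg}: producing a genuinely admissible $\mu^{(2)}$ that shadows $\mu^{(1)}$ in $W_2$ with a controlled Gr\"onwall rate, rather than simply translating curves by an optimal transport map (which would leave the velocities outside $\F(\cdot,\mu^{(2)}_t)$). Once this construction from \cite[Proposition 4]{JMQ20} is available, the only change needed here is to replace the boundedness or modulus-of-continuity bounds on $g$ by the $\mm$-local Lipschitz constant $L_{\bar R}$ at the decisive estimate; everything downstream is then a routine translation of the arguments in \cite{JMQ20}.
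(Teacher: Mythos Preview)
Your proposal is correct and follows essentially the same route as the paper: establish the $\mm$-local Lipschitz estimate in $\nu$ by shadowing an (near-)optimal trajectory from $\nu_1$ with an admissible one from $\nu_2$ via the Filippov/Gr\"onwall construction of \cite{JMQ20}, then deduce time-continuity from the DPP together with the Lipschitz bound just obtained, and finally invoke \cite[Theorem 3, step 2]{JMQ20} verbatim for the viscosity property. One small correction: because $\F$ is non-local in $\mu$, the shadowing estimate from \cite[Proposition 2]{JMQ20} (this is the relevant reference, not Proposition 4) gives $W_2(\mu^{(1)}_t,\mu^{(2)}_t)\le e^{LT+Te^{LT}}W_2(\nu_1,\nu_2)$, not $e^{L(T-t)}W_2(\nu_1,\nu_2)$; the simpler constant you wrote would hold only for a local $\F$, but this does not affect the structure of the argument.
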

\begin{proof}
Let $R>0$ and denote by $L_{R,g}>0$ the local Lipschitz contant of $g$ as in \eqref{eq:lipphi}, let $L$ be the Lipschitz constant of $\F$ as in Hypothesis \ref{hypo}. We prove item \eqref{item:LipmuUg}. Let $\nu_1,\nu_2\in\prob_2(\R^d)$ be such that $\m{\nu_i}\le R$ for $i=1,2$, let $s\in[0,T]$ and $\mu=(\mu_t)_t\in\adm{[s,T]}{\nu_1}$ be an optimal trajectory for the optimization problem \eqref{eq:MayerProbl}, whose existence is granted by \cite[Corollary 2]{JMQ20}.
By the Gronwall estimate in \cite[Proposition 2]{JMQ20}, there exists an admissible trajectory $\theta=(\theta_t)_t\in\adm{[s,T]}{\nu_2}$ such that
\[W_2(\mu_t,\theta_t)\le e^{LT+Te^{LT}}W_2(\nu_1,\nu_2),\quad\forall t\in[s,T].\]
We get
\begin{equation}\label{eq:LipmuUg}
U_g(s,\nu_2)-U_g(s,\nu_1)\le g(\theta_T)-g(\mu_T)\le L_{\tilde{R},g}\,e^{LT+Te^{LT}}W_2(\nu_1,\nu_2),
\end{equation}
where
\[\tilde{R}=e^{LT e^{LT}+LT} (R+K_F\,T),\]
comes from Lemma \ref{lem:attainCM}(1).
By reversing the roles of $\nu_1$ and $\nu_2$ we get that $U_g(s,\cdot)$ is $\mm$-locally Lipschitz in $\prob_2(\R^d)$ uniformly w.r.t. $s\in[0,T]$.

We now prove item \eqref{item:contUg}. Let $\nu\in\prob_2(\R^d)$, $0\le s_1\le s_2 \le T$ and $\mu=(\mu_t)_t\in\adm{[s_1,T]}{\nu}$ be an optimal trajectory. Since gluing of admissible trajectories is still admissible and $\nu=\mu_{s_1}$, we have
\[U_g(s_1,\nu)-U_g(s_2,\nu)=U_g(s_2,\mu_{s_2})-U_g(s_2,\mu_{s_1})\le L_{R,g}\,e^{LT+Te^{LT}}W_2(\mu_{s_2},\mu_{s_1}),\]
thanks to \eqref{eq:LipmuUg}, and recalling again that, by Lemma \ref{lem:attainCM}(1), there exists $R>0$ such that
\[\sup_{t\in[0,T]}\m{\mu_t}\le R.\]
 By absolute continuity of $t\mapsto\mu_t$ and reversing the roles of $s_1,s_2$ we conclude.
\medskip

The proof of the last part of the statement is provided in \cite[Theorem 3 - step2]{JMQ20} with no variations.
\end{proof}

\begin{bibdiv}
\begin{biblist}

\bib{Al-J}{article}{
   author={Alfonsi, Aur\'elien},
   author={Jourdain, Benjamin},
   title={Squared quadratic Wasserstein distance: optimal couplings and
   Lions differentiability},
   journal={ESAIM Probab. Stat.},
   volume={24},
   date={2020},
   pages={703--717},
   issn={1292-8100},
   review={\MR{4174419}},
   doi={10.1051/ps/2020013},
}

\bib{ag}{article}{
 author = {Ambrosio, Luigi},  author = { Gangbo, Wilfrid},
 title = {Hamiltonian {ODEs} in the {Wasserstein} space of probability measures},
 fjournal = {Communications on Pure and Applied Mathematics},
 journal = {Commun. Pure Appl. Math.},
 issn = {0010-3640},
 volume = {61},
 number = {1},
 pages = {18--53},
 year = {2008},
}
\bib{ags}{book}{
	Author = {Ambrosio, Luigi}, Author = {Gigli, Nicola}, Author = {Savar{\'e}, Giuseppe},
	Publisher = {Birkh{\"a}user Basel},
	Series = {Lectures in Mathematics. ETH Z{\"u}rich},
	Title = {Gradient Flows In Metric Spaces and in the Space of Probability Measures},
	Year = {2008}}

\bib{Au}{book}{
    AUTHOR = {Aubin, Jean-Pierre},
     TITLE = {Viability theory},
    SERIES = {Systems \& Control: Foundations \& Applications},
 PUBLISHER = {Birkh\"{a}user Boston, Inc., Boston, MA},
      YEAR = {1991},
     PAGES = {xxvi+543},
      ISBN = {0-8176-3571-8},
   MRCLASS = {49-02 (34A60 47H99 47N70 49J24 93B05 93B52)},
  MRNUMBER = {1134779},
MRREVIEWER = {T. Zolezzi},
}

\bib{AuC}{book}{
   author={Aubin, Jean-Pierre}, Author = { Cellina, Arrigo},
   title={Differential Inclusions},
   subtitle={Set-Valued Maps and Viability Theory},
   publisher={Springer-Verlag Berlin Heidelberg},
   place={Germany},
   date={1984},
   pages={xx+342},
   isbn={3-540-13105-1}
   }

\bib{AuF}{book}{
  author={Aubin, Jean-Pierre}, Author = {Frankowska, H{\'e}l{\`e}ne},
  title={Set-valued analysis},
  series={Modern Birkh\"auser Classics},
  publisher={Birkh\"auser Boston Inc.},
  place={Boston, MA},
  date={2009},
  pages={xx+461},
  isbn={978-0-8176-4847-3}
  }

\bib{AJZ}{article}{
 Author = {Aussedat, A.}, Author = { Jerhaoui, O.}, Author = { Zidani, H.},
 Title = {Viscosity solutions of centralized control problems in measure spaces},
 FJournal = {European Series in Applied and Industrial Mathematics (ESAIM): Control, Optimization and Calculus of Variations},
 Journal = {ESAIM, Control Optim. Calc. Var.},
 ISSN = {1292-8119},
 Volume = {30},
 Pages = {37},
 Note = {Id/No 91},
 Year = {2024}
}

\bib{AMQ}{article}{
    AUTHOR = {Averboukh, Yurii}, Author = {Marigonda, Antonio}, Author = { Quincampoix, Marc},
     TITLE = {Extremal shift rule and viability property for mean field-type
              control systems},
   JOURNAL = {J. Optim. Theory Appl.},
  FJOURNAL = {Journal of Optimization Theory and Applications},
    VOLUME = {189},
      YEAR = {2021},
    NUMBER = {1},
     PAGES = {244--270},
}

\bib{aver}{article}{
 author = {Averboukh, Yurii V.}, Author = {Volkov, Aleksei M.},
 title = {Lyapunov stability of an equilibrium of the nonlocal continuity equation},
 fjournal = {Sbornik: Mathematics},
 journal = {Sb. Math.},
 issn = {1064-5616},
 volume = {216},
 number = {2},
 pages = {140--167},
 year = {2025},
}

\bib{BadF22}{article}{
title = {Viability and invariance of systems on metric spaces},
journal = {Nonlinear Analysis},
volume = {225},
year = {2022},
url = {https://www.sciencedirect.com/science/article/pii/S0362546X22002103},
author = {Zeinab Badreddine}, Author = {Hélène Frankowska},
}

\bib{BF22}{article}{
  author={Bonnet, Benoît}, Author = {Frankowska, Hélène},
  booktitle={2022 IEEE 61st Conference on Decision and Control (CDC)}, 
  title={Viability and Exponentially Stable Trajectories for Differential Inclusions in Wasserstein Spaces}, 
  year={2022},
  volume={},
  number={},
  pages={5086-5091},
}

\bib{BFarxiv}{article}{
   author={Bonnet-Weill, Beno\^it},
   author={Frankowska, H\'el\`ene},
   title={On the viability and invariance of proper sets under continuity
   inclusions in Wasserstein spaces},
   journal={SIAM J. Math. Anal.},
   volume={56},
   date={2024},
   number={3},
   pages={2863--2914},
}

\bib{CMR}{article}{
   author={Capuani, Rossana},
   author={Marigonda, Antonio},
   author={Ricciardi, Michele},
   title={Random lift of set valued maps and applications to multiagent
   dynamics},
   journal={Set-Valued Var. Anal.},
   volume={31},
   date={2023},
   number={3},
   pages={Paper No. 28, 41},
}

\bib{CQ}{article}{
 author={Cardaliaguet, P.},
 author={Quincampoix, M.},
 title={Deterministic differential games under probability knowledge of initial condition},
 journal={International Game Theory Review},
 volume={10},
 number={1},
 pages={1--16},
 date={2008},
 publisher={World Scientific, Singapore},
}

\bib{CMattain}{article}{
    AUTHOR = {Cavagnari, Giulia}, Author = {Marigonda, Antonio},
     TITLE = {Attainability property for a probabilistic target in
              {W}asserstein spaces},
   JOURNAL = {Discrete Contin. Dyn. Syst.},
  FJOURNAL = {Discrete and Continuous Dynamical Systems. Series A},
    VOLUME = {41},
      YEAR = {2021},
    NUMBER = {2},
     PAGES = {777--812},
}

\bib{CMQ}{article}{
    AUTHOR = {Cavagnari, Giulia}, Author = {Marigonda, Antonio}, Author = {Quincampoix,
              Marc},
     TITLE = {Compatibility of state constraints and dynamics for multiagent
              control systems},
   JOURNAL = {J. Evol. Equ.},
  FJOURNAL = {Journal of Evolution Equations},
    VOLUME = {21},
      YEAR = {2021},
    NUMBER = {4},
     PAGES = {4491--4537},
}

\bib{CLOS}{article}{
    AUTHOR = {Cavagnari, Giulia}, Author = {Lisini, Stefano}, Author = {Orrieri, Carlo}, Author = {
              Savar\'{e}, Giuseppe},
     TITLE = {Lagrangian, {E}ulerian and {K}antorovich formulations of
              multi-agent optimal control problems: equivalence and
              gamma-convergence},
   JOURNAL = {J. Differential Equations},
  FJOURNAL = {Journal of Differential Equations},
    VOLUME = {322},
      YEAR = {2022},
     PAGES = {268--364},
}

\bib{CMP}{article}{
    AUTHOR = {Cavagnari, Giulia}, Author = {Marigonda, Antonio}, Author = { Piccoli,
              Benedetto},
     TITLE = {Averaged time-optimal control problem in the space of positive
              {B}orel measures},
   JOURNAL = {ESAIM Control Optim. Calc. Var.},
  FJOURNAL = {ESAIM. Control, Optimisation and Calculus of Variations},
    VOLUME = {24},
      YEAR = {2018},
    NUMBER = {2},
     PAGES = {721--740},
}

\bib{DMPV}{article}{
   author={D'Apice, C.},
   author={Manzo, R.},
   author={Piccoli, B.},
   author={Vespri, V.},
   title={Lyapunov stability for measure differential equations},
   journal={Math. Control Relat. Fields},
   volume={14},
   date={2024},
   number={4},
   pages={1391--1407},
}

\bib{DMR}{article}{
   author={Duprez, Michel},
   author={Morancey, Morgan},
   author={Rossi, Francesco},
   title={Approximate and exact controllability of the continuity equation
   with a localized vector field},
   journal={SIAM J. Control Optim.},
   volume={57},
   date={2019},
   number={2},
   pages={1284--1311},
}

\bib{G1}{article}{
 author={Gangbo, Wilfrid},
 author={Nguyen, Truyen},
 author={Tudorascu, Adrian},
 title={Hamilton-Jacobi equations in the Wasserstein space},
 journal={Methods and Applications of Analysis},
 volume={15},
 number={2},
 pages={155--184},
 date={2008},
 publisher={International Press of Boston, Somerville, MA},
}

\bib{G2}{article}{
 author={Gangbo, Wilfrid},
 author={Tudorascu, Adrian},
 title={On differentiability in the Wasserstein space and well-posedness for Hamilton-Jacobi equations},
 journal={Journal de Math{\'e}matiques Pures et Appliqu{\'e}es. },
 volume={125},
 pages={119--174},
 date={2019},
 publisher={Elsevier (Elsevier Masson), Paris},
}

\bib{Jimenez}{article}{
    AUTHOR = {Jimenez, Chlo\'{e}},
     TITLE = {Equivalence between strict viscosity solution and viscosity solution in the {Wasserstein} space and regular extension of the {Hamiltonian} in {{\(L^2_{\mathbb{P}}\)}}},
 FJournal = {Journal of Convex Analysis},
 Journal = {J. Convex Anal.},
 ISSN = {0944-6532},
 Volume = {31},
 Number = {2},
 Pages = {619--670},
 Year = {2024},

}

\bib{JMQ20}{article}{
    AUTHOR = {Jimenez, Chlo\'{e}}, Author = {Marigonda, Antonio}, Author = {Quincampoix, Marc},
     TITLE = {Optimal control of multiagent systems in the {W}asserstein
              space},
   JOURNAL = {Calc. Var. Partial Differential Equations},
  FJOURNAL = {Calculus of Variations and Partial Differential Equations},
    VOLUME = {59},
      YEAR = {2020},
    NUMBER = {2},
     PAGES = {Paper No. 58},
}

\bib{JMQ21}{article}{
    AUTHOR = {Jimenez, Chlo\'{e}}, Author = {Marigonda, Antonio}, Author = {Quincampoix,
              Marc},
     TITLE = {Dynamical systems and {H}amilton-{J}acobi-{B}ellman equations
              on the {W}asserstein space and their {$L^2$} representations},
   JOURNAL = {SIAM J. Math. Anal.},
  FJOURNAL = {SIAM Journal on Mathematical Analysis},
    VOLUME = {55},
      YEAR = {2023},
    NUMBER = {5},
     PAGES = {5919--5966},
}	

\bib{MQ}{article}{
 author={Marigonda, Antonio},
 author={Quincampoix, Marc},
 title={Mayer control problem with probabilistic uncertainty on initial positions},
 journal={Journal of Differential Equations},
 volume={264},
 number={5},
 pages={3212--3252},
 date={2018},
 publisher={Elsevier (Academic Press), San Diego, CA},
} 

\end{biblist}
\end{bibdiv}

\end{document}